\documentclass[12pt,a4paper]{article}
\usepackage{graphicx} 
\usepackage{mathrsfs,amssymb,amsfonts,amsthm,amsmath,amsbsy,fullpage,mathtools}
\usepackage{mathtools}
\usepackage{mathabx}
\usepackage{graphicx}
\usepackage{wrapfig}
\usepackage{caption}
\usepackage{subfig}
\usepackage{dsfont}
\usepackage{float}
\usepackage{upgreek}
\usepackage{listings}
\usepackage{lipsum}
\usepackage{bbm, dsfont}
\newtheorem{theorem}{Theorem}[section]
\newtheorem{proposition}[theorem]{Proposition}
\newtheorem{corollary}[theorem]{Corollary}
\newtheorem{lemma}[theorem]{Lemma}

\newtheorem{remark}[theorem]{Remark}

\usepackage{inlinelabel}

\allowdisplaybreaks

\title{Graph distance and effective resistance \\ of the four-dimensional random walk trace}
\author{
D.~Shiraishi\footnote{Graduate School of Informatics, Kyoto University, shiraishi@acs.i.kyoto-u.ac.jp.}, S.~Watanabe\footnote{Research Institute for Mathematical Sciences, Kyoto University, swatanab@kurims.kyoto-u.ac.jp.}}

\begin{document}

\maketitle

\begin{abstract}
Refining previous results, we establish a sharp asymptotic estimate on the expected graph distance between the origin and the terminal point of the trace of the first $n$ steps of the walk. A similar conclusion is drawn for the resistance metric. 
\end{abstract}


\section{Introduction}

Let $S$ be a simple random walk on $\mathbb{Z}^{d}$ started at the origin.
For $0 \le m \le n $, we denote by ${\cal G}_{m,n}$ the random graph whose vertex set is given by the sites visited by $S$ from time $m$ up to time $n$, and whose edge set is given by the edges crossed by the walk. For the precise definition, see \eqref{gmn}.
In \cite{BurLaw}, the effective resistance $R_{n}$ between the origin and $S(n)$ in ${\cal G}_{0,n}$, and the graph distance $D_{n}$ between these endpoints in ${\cal G}_{0,n}$, were studied as quantities that reflect the geometric structure of ${\cal G}_{0,n}$.

In dimensions $d \ge 5$, where self-intersections of the random walk are relatively rare, these quantities are known to grow linearly in $n$.
By contrast, in dimensions $d=2,3$, it is conjectured in \cite{BurLaw} that for each of these quantities there exists a constant $\alpha \in (0,1)$, depending not only on the dimension $d$ but also on whether one considers $R_n$ or $D_n$, such that the expectation of this quantity grows as $n^{\alpha + o(1)}$.
Determining the exact value of $\alpha$ appears to be a problem of considerable difficulty, particularly in the case $d=3$, and it seems possible that the value may never be determined.
Moreover, to the best of our knowledge, even the existence of such an exponent $\alpha$ has not yet been established.

We now turn to the case $d = 4$.
The four-dimensional case corresponds to the critical dimension for this model, and, as is often the case at criticality, determining the correct power of the logarithmic correction would be important.
Indeed, in \cite{BurLaw}, it was conjectured that there exist constants $\beta, \gamma > 0$ such that the expectations of $R_n$ and $D_n$ behave as $n (\log n)^{-\beta + o(1)}$ and $n (\log n)^{-\gamma + o(1)}$, respectively.
However, no specific conjectural values for $\beta$ and $\gamma$ were provided in \cite{BurLaw}.

Subsequently, it was proved in \cite{exact} (see also \cite{exacta}) that the expectation of $R_n$ behaves as $n (\log n)^{-\frac{1}{2} + o(1)}$.
Although this was not stated explicitly in \cite{exact}, the methods developed there can also be adapted to show, by similar arguments, that the expectation of $D_n$ behaves as $n (\log n)^{-\frac{1}{2} + o(1)}$. In particular, this indicates that $\beta = \gamma = \frac{1}{2}$.

The main result of the present paper provides the following improvement of the above result from \cite{exact}, achieved by eliminating the $o(1)$-term.

\begin{theorem}\label{main}
Let $d=4$. There exist constants $\delta >0$ and $0 < c_{\text{res}} \le c_{\text{gra}} < \infty$ such that 
\begin{align}
& E ( R_{n}  ) = c_{\text{res}} \, n (\log n)^{-\frac{1}{2}}   \Big\{ 1 + O \Big( \big( \log n \big)^{-\delta} \Big) \Big\}, \label{main1} \\
&E ( D_n ) = c_{\text{gra}} \, n (\log n)^{-\frac{1}{2}}   \Big\{ 1 + O \Big( \big( \log n \big)^{-\delta} \Big) \Big\}. \label{main2}
\end{align}
\end{theorem}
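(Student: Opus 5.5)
We will follow the strategy behind the loop-erased random walk (LERW) estimates in four dimensions, where the probability $\mathrm{Es}(n)$ that the LERW and an independent random walk do not intersect satisfies $\mathrm{Es}(n) = c\,(\log n)^{-1/3}\{1+O((\log n)^{-\delta})\}$. The analogous key quantity here is the probability that a time $k$ is a \emph{cut time} of the walk, or more generally a ``useful'' time that contributes to $D_n$ or $R_n$.

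We start from the decomposition used in \cite{exact}. Let $\mathcal{C}_n$ be the set of cut times $k\in[0,n]$, meaning $S[0,k]\cap S[k+1,n]=\emptyset$. Every path from $0$ to $S(n)$ in $\mathcal{G}_{0,n}$ must cross each cut edge. Hence $R_n$ and $D_n$ are bounded below by $|\mathcal{C}_n|$. They are bounded above by the sum of the local contributions between consecutive cut points. More precisely, we write
\begin{equation*}
D_n=\sum_{k=1}^{n} \mathbf{1}\{ \text{the edge } (S(k-1),S(k)) \text{ lies on a fixed geodesic, suitably defined}\},
\end{equation*}
and similarly we write $R_n$ as a sum of local resistance increments via the series law across cut points. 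Taking expectations reduces both estimates to
\begin{equation*}
P(\text{step } k \text{ contributes}) = c_*\,(\log n)^{-1/2}\{1+O((\log n)^{-\delta})\}
\end{equation*}
uniformly in $k\in[n(\log n)^{-1},\,n-n(\log n)^{-1}]$. The boundary times contribute $O(n(\log n)^{-1})$.

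The first step is to show that the contribution indicator is determined, up to an error of probability $O((\log n)^{-1/2-\delta})$, by the walk in a window $[k-m,k+m]$ with $m=n(\log n)^{-A}$. Loops of the walk far from $k$ do not affect local geodesic membership at $k$ once there are many cut points in between. This follows from the fact, proved in \cite{exact}, that cut times have density about $(\log n)^{-1/2}$ on every scale.

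The second step reduces the problem to two-sided walks. We introduce a two-sided walk $(S^1,S^2)$ from the origin and the events $A_j$ that the local contribution indicator on scale $2^j$ holds. The goal is the approximate multiplicativity
\begin{equation*}
P(A_{j+1}) = P(A_j)\,(1-a_j), \qquad a_j=\frac{1}{2j}+O(j^{-1-\delta}).
\end{equation*}
We will derive this from the fact that the conditional probability of an intersection on the annulus from scale $2^j$ to $2^{j+1}$ is governed by the expected number of intersections, which is of order $1/j$. The exponent $1/2$ matches the known non-intersection exponent of two-sided four-dimensional walks, namely $P(S^1[0,n]\cap S^2[1,n]=\emptyset)\asymp(\log n)^{-1/2}$. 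Summing $\log(1-a_j)$ then gives $P(A_J)=c\,J^{-1/2}(1+O(J^{-\delta}))$.

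The main obstacle is to establish the rate $O(j^{-1-\delta})$ in $a_j$, uniformly in the configuration, and not merely $o(1/j)$. We will first prove a concentration estimate for the conditional intersection probability, showing that it is close to its mean with fluctuation $O(j^{-1-\delta})$ except on an event of small probability. This is an analogue of Lawler's concentration for $\sum \mathbf{1}\{S^1\cap S^2\}$ on dyadic scales, following the methods of \cite{exact} but made quantitative by a second-moment bound on intersection local times. Finally, the inequality $c_{\rm res}\le c_{\rm gra}$ will follow from $R_n\le D_n$ pointwise, and positivity follows from $R_n\ge|\mathcal{C}_n|$ together with $E|\mathcal{C}_n|\asymp n(\log n)^{-1/2}$.
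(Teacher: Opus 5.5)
\paragraph{Gap in the multiplicativity step.}
Your reduction and localization steps reformulate the problem without advancing it. Averaging $P(\text{step } k \text{ contributes})$ over $k$ just returns $\psi(n)=E(D_n)/n$. The recursion $P(A_{j+1})=P(A_j)(1-a_j)$ with $a_j=\frac{1}{2j}+O(j^{-1-\delta})$ is condition (II) of Lemma~\ref{deriv} rewritten for a one-point function. So the whole theorem sits in the multiplicativity step, and there you offer only an analogy with non-intersection exponents, which does not carry over.

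First, membership in a geodesic is not monotone in the scale. A new loop returning near the origin can put a step that was bypassed at scale $2^j$ back on the geodesic at scale $2^{j+1}$. So $P(A_{j+1})/P(A_j)$ is not a conditional survival probability, and you would also need $P(A_{j+1}\setminus A_j)=O(j^{-3/2-\delta})$. For $R_n$ you never define a per-step quantity at all.

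Second, on $A_j$ the step need not be a cut point; it may lie inside a bead of the trace. It is lost at the next scale only through a new loop that encloses this bead and is flanked by cut points forcing the geodesic through the shortcut (the role of $V''$, $W^4$, $W^5$ in the paper). You must show this loop is nearly independent of the biased configuration selected by $A_j$. The conditional hazard is not uniform in the configuration, so exceptional sets must have probability $O(j^{-3/2-\delta})$; saying they have ``small probability'' does not address this.

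Third, the heuristic that fixes the constant is off by a factor $j$. In $d=4$ the expected number of intersections between the two sides in a dyadic annulus is of order one. The probability is of order $1/j$ only because, given one intersection, there are $\asymp j$ of them. Concentration of the conditional intersection probability around its mean does not identify that mean. Pinning it to $\frac{1}{2j}(1+O(j^{-\delta}))$ needs the last-exit identity \eqref{equal1} together with concentration of $G^\lambda$ (Proposition~\ref{qJ}, Corollary~\ref{Fn}). Matching the two-sided non-intersection exponent $\frac12$, which governs cut times rather than geodesic steps, is a consistency check, not an argument.

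\paragraph{What the paper does instead.}
The paper never conditions on a one-point event. It computes $E(D^1_n+D^2_n-D_n)=n\{\psi(n/2)-\psi(n)\}$ directly, splitting according to the length $L$ of the longest shortcut between the two halves. The relevant intersection probabilities are unconditional. They come from the sharp long-range intersection formula of Proposition~\ref{fnk}, whose explicit error term lets the weighted sums over $l$ produce $\frac{\log 2}{2\log n}\,n\psi(n)$ up to relative error $(\log n)^{-c}$.

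The saving on each such event is $l\,a_{n,-b}\psi(n)$ up to controlled errors. This uses the variance and tail bounds of Lemmas~\ref{var} and~\ref{uppertail}, existence of cut times near the ends of the loop, and freezing-lemma near-independence estimates such as \eqref{deco} and Lemma~\ref{last}. Your plan contains none of these ingredients or a substitute for them. Your closing remarks on $c_{\text{res}}\le c_{\text{gra}}$ and on positivity via cut times are correct.
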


We briefly discuss the proof of Theorem \ref{main} here.
Since the proofs for $R_n$ and $D_n$ are essentially identical, we explain the argument for $D_n$.
The basic strategy is similar to that of the proof of \cite[Theorem 1.2.3]{exact}. If the logarithmic correction for the expectation of $D_{n}$ is defined by
$$\psi (n) := \frac{E (D_{n})}{n},$$
the objective is to obtain an estimate of the difference between $\psi (n)$ and $\psi (2n)$ as precisely as possible. More precisely, the main contribution of this paper is to establish the following statement: there exists $\epsilon > 0$ such that, for all $n \ge 1$,
\begin{equation}\label{GOAL1}
\psi (2n) = \psi (n) \Big\{ 1 - \frac{\log 2 }{2 \log n} + O \big( (\log n)^{-1 - \epsilon} \big) \Big\},
\end{equation}
holds. Theorem \ref{main} can then be deduced by combining the equation \eqref{GOAL1} with the fact that $\psi$ is a slowly varying function. For further details, we refer to Lemma \ref{deriv} below.

It may be worthwhile to emphasize the difference from \cite[Theorem 1.2.3]{exact}. In \cite{exact}, it was shown that for any $\eta > 0$, there exists $N_{\eta} \ge 1$ such that, for all $n \ge N_{\eta}$,
$$\psi (2n) \le \psi (n) \Big\{ 1 - \frac{\log 2 }{2 \log n} ( 1 - \eta)  \Big\}.$$
As can be seen from \eqref{GOAL1}, the above inequality obtained in \cite{exact} is not sufficient for our purposes, and a more precise control of the error term is required.

With this in mind, let $D_{n}'$ denote the graph distance between $S(n)$ and $S(2n)$ in ${\cal G}_{n,2n}$. Since the difference between $\psi(n)$ and $\psi(2n)$ is given by
$$\psi (n) - \psi (2n) = \frac{E (D_{n} + D_{n}' - D_{2n})}{2n},$$
it is necessary to analyze the expectation on the right-hand side.
Now, suppose that there exist two times $k$ and $l$ satisfying $0 \le k \le n \le l \le 2n$ such that $S(k) = S(l)$. Intuitively, as $l-k$ becomes larger, the graph ${\cal G}_{0,2n}$ admits a more substantial shortcut, and consequently $D_{n} + D_{n}' - D_{2n}$ becomes smaller. 

In view of the above observation, we write 
$$M = \max \{ l -k \ : \ 0 \le k \le n \le l \le 2n, \ S(k) = S(l) \},$$
for the maximal intersection length and estimate the expectation of $D_{n} + D_{n}' - D_{2n}$ by considering cases according to the value of $M$. (In fact, rather than working directly with $M$, we analyze a closely related quantity $L$, defined in \eqref{L} below.) Roughly speaking, conditioned on $M = m$, the quantity $D_{n} + D_{n}' - D_{2n}$ can, with high conditional  probability, be approximated by $m \psi (m)$. Therefore, it follows that the expectation of $D_{n} + D_{n}' - D_{2n}$ can be well approximated by
\begin{equation}\label{sumM}
\sum_{m} m \psi (m) P (M=m).
\end{equation}
By giving a sharp estimate of the distribution of $M$, one can derive precise upper and lower bounds for the sum in \eqref{sumM} and thus obtain \eqref{GOAL1}. For details, we refer to Propositions \ref{upper} and \ref{lower} below.

All error terms arising in this analysis must be controlled so as to be of order at most that of the error term appearing in \eqref{GOAL1}. One difficulty imposed by this requirement is estimating the distribution of $M$ with sufficient precision. The problem of estimating the distribution of $M$ can be reduced to a question concerning a certain type of long-range intersections of the random walk, and is addressed in Proposition \ref{fnk} below.

Finally, we discuss potential future applications of Theorem \ref{main}. As one such application, \cite{AOS} studies the limiting distributions of appropriately rescaled $D_n - \mathbb{E}(D_n)$ and $R_n - \mathbb{E}(R_n)$. In order to carry out such an analysis, precise estimates of $\mathbb{E}(D_{n})$ and $\mathbb{E}(R_{n})$, as provided by Theorem \ref{main}, serve an indispensable role. We also note that such fluctuations of $D_n$ and $R_n$ have been studied in \cite{AO}.
Another direction of application is the study of the simple random walk on the entire trace $\mathcal{G}:= {\cal G}_{0, \infty}$. 
Since $\mathcal{G}$ is locally finite, a simple random walk $X$ can be defined on this random graph. 
A refined estimate of $R_n$ yields volume estimates for resistance balls in 
$\mathcal{G}$ down to small scales, which in turn allows us to derive a sharp heat kernel estimate.
As a further application, our study of collisions of independent simple random walks on $\mathcal{G}$ is in progress and the results will be presented in a separate paper.

The organization of the present paper is as follows. In Section \ref{NOTATION}, we define the notation and concepts that are used throughout the paper. In Section \ref{sec:LRI}, we establish precise probabilistic estimates of long-range intersections of the random walk, which are necessary for evaluating the distribution of $M$. Finally, in Section \ref{pfmain}, we prove Theorem \ref{main}.

\subsection*{Acknowledgments}
The authors thank David A.~Croydon and Umberto De Ambroggio for their helpful discussions. DS was supported by JSPS Grant-in-Aid for Scientific Research (C) 22K03336, JSPS Grant-in-Aid for Scientific Research (B) 22H01128
and 21H00989. SW was supported by JSPS KAKENHI Grant Number 25K23335.

\section{Notation and estimates}

In this section, we introduce notation and provide an improved estimate of the intersection probability of simple random walks on $\mathbb{Z}^4$, which is crucial to prove Theorem \ref{main}. 

\subsection{Notation}\label{NOTATION}

We write $\mathbb{Z}^d$ and $\mathbb{R}^d$  for the $d$-dimensional  integer lattice and the $d$-dimensional Euclidean space, respectively. For $x \in \mathbb{R}^{d}$, $|x|$ denotes the Euclidean distance between $x$ and the origin. We will consider only the $d = 4$ case in this article. 

Let $S$ be a simple random walk in $\mathbb{Z}^{4}$, and write $S(j)$ for the walker at time $j\in \mathbb{Z}_+$, the latter denoting the set of non-negative integers. The probability law and its expectation of $S$ are denoted by $P^{x}$ and $E^{x}$, respectively when we assume $S(0) = x$. Throughout this paper, even when $r \ge 0$ is not necessarily an integer, for notational simplicity (even if it were needed), we do not use $\lfloor r \rfloor$. For instance, when $r$ is not an integer, $S(r)$ is to be understood as $S(\lfloor r \rfloor)$.

Throughout the paper, $c$, $C$, $c_{1}, \cdots$ denote constants whose values may change from line to line. Let $\{ a_{n} \}_{n \ge 1}$ be a sequence with  $a_{n} > 0$ for all $n \ge 1$.
For a sequence $\{ b_{n} \}_{n \ge 1}$, we write $b_{n} = o (a_{n})$ if $\lim_{n \to \infty} \frac{b_{n}}{a_{n}} = 0$.
We also write $b_{n} = O (a_{n})$ if there exists a constant $C > 0$ such that $|b_{n}| \le C a_{n}$ for all $n \ge 1$. When we want to make it explicit that the rate of convergence to $0$ or the constant $C>0$ depends on a certain parameter $\alpha$, we write $o_{\alpha}(a_n)$ or $O_{\alpha}(a_n)$.

We denote by ${\cal G}_{m, n}$ the (random) graph whose vertex set is given by the sites visited by the walker between times $m$ and $n$, and whose edge set corresponds to a pair of consecutive visited sites. Formally, the vertex and edge sets of ${\cal G}_{m, n}$ are
\begin{equation}\label{gmn}
\{ S(j) \mid m \le j \le n \} \text{ and } \{ \{ S(j), S(j+1) \} \mid m \le j \le n-1 \},
\end{equation}
respectively. We also denote by $\mathcal{G}$ the infinite (random) graph whose vertex and edge sets are given by
\[
    \{S(j)\mid j\ge 0\};\quad \{\{S(j),S(j+1)\}\mid j\ge 0\},
\]
respectively. 

For a graph $G = (V, E)$, let $R_{G} (\cdot, \cdot )$ denote the effective resistance  on $G$. That is, $R_{G}(\cdot,\cdot)$ is defined as follows.

\begin{itemize}
\item For $f, g \in \mathbb{R}^{V}$, write
$${\cal E} (f, g) = \frac{1}{2} \sum_{x, y \in V, \, \{ x, y \} \in E} (f(x) - f(y)) (g(x) - g(y)), $$
for a quadratic form. 

\item For  disjoint subsets $A$ and $B$  of $V$, define 
$$ R_{G} (A, B) = \big( \inf \big\{ {\cal E} (f, f) \ : \ f \in \mathbb{R}^{V}, \, {\cal E} (f, f) < \infty, \, f|_{A} = 1, \, f|_{B} = 0 \big\} \big)^{-1}. $$ We write $R_{G} (x, y) = R_{G} (\{ x \}, \{ y \})$ for $x, y \in V$.
\end{itemize}
Also, we let $d_{G} (\cdot, \cdot )$ denote the  graph distance on $G$. 

We say $0 \le k \le n$ is a cut time up to $n$ if $S[0,k] \cap S[k+1, n ] = \emptyset.$ Here, for $0 \le a \le b < \infty$, we denote by $S[a,b]$ the set $\{ S_k : a \le k \le b \}.$
It is shown in \cite{slowly} that there exists $c >0$ such that if we write 
$${\cal C}_{n} = \sharp \big\{ 0 \le k \le n \ : \ k \text{ is a cut time up to } n \big\},$$
(here $\sharp A$ denotes the number of elements in the set $A$), then for $d=4$ 
\begin{equation}\label{LCTN}
E ({\cal C}_{n}) = c n (\log n)^{-\frac{1}{2}} \big( 1 + o (1) \big).
\end{equation}
Note that 
\begin{equation}\label{LCTN1}
{\cal C}_{n} \le R_{{\cal G}_{0,n}} (0, S(n)) \le d_{{\cal G}_{0,n}} (0, S(n)).
\end{equation}
It should be also noted that the quantities $R_{n}$ and $D_{n}$ in Theorem \ref{main} coincide with $R_{{\cal G}_{0,n}} (0, S(n))$ and $d_{{\cal G}_{0,n}} (0, S(n))$, respectively.

\subsection{Long range intersection}\label{sec:LRI}

Fix an integer $k \ge 2$. It is known that as $n \to \infty$,
\begin{equation}\label{LRI}
P \Big( S[0, n] \cap S[(k+1)n, (k+2)n] \neq \emptyset \Big) = \frac{2 \log (k+1) - \log k - \log (k+2)}{2 \log n } \big( 1 + o_{k} (1) \big).
\end{equation}
See the proof of  \cite[Theorem 4.3.6 (i)]{Lawb} for this. 
Unfortunately, in \cite{Lawb}, it does not appear that the order of the error term $o_{k}(1)$ in \eqref{LRI} is specified. %
In this subsection, we will examine the order of $o_{k}(1)$ and its dependence on $k$ (see Proposition \ref{fnk} below). 
As will become clear later, these are necessary to prove Theorem \ref{main}. 

For that purpose, we first evaluate the non-intersection probability of three independent random walks in Corollary \ref{Fn} below. 
Let $S^{1}, S^{2}$ and $ S^{3}$ 
be independent simple random walks in $\mathbb{Z}^{4}$ started at the origin. 

We proceed with this as follows. For $\lambda \in (0,1)$, we prepare independent killing times $T^{1}, T^{2}, T^{3}$ with killing rate $1-\lambda$ (the random times $T^{1}, T^{2}, T^{3}$ are independent of $S^{1}, S^{2}, S^{3}$), that is, for each $j \ge 0$, 
$$ P ( T^{i} = j ) = (1- \lambda) \lambda^{j} \ \ \ \text{for }  \ \  i=1,2,3.$$

Let 
\begin{equation}\label{q}
q= q_{\lambda} = P_{1} \Big( S^{1} (0, T^{1}] \cap \big( S^{2} [0, T^{2}] \cup S[0, T^{3}] \big) = \emptyset \Big)
\end{equation}
be the non-intersection probability given $S^{2}$ and $S^{3}$ so that $q$ is a random variable with respect to $S^{2}, S^{3}$ (and $T^{2}, T^{3}$). That is, the probability $P_{1}$ in \eqref{q} is taken with respect to $S^{1}$ and $T^{1}$. We also write 
\begin{equation*}
J = J_{\lambda} = {\bf 1} \{ 0 \not\in S^{3} (0, T^{3}] \},
\end{equation*}
for the indicator function of the event that $S^{3}$ does not hit the origin up to time $T^{3}$. Finally, we let 
\begin{equation*}
G^{\lambda} = \sum_{j=0}^{T^{2}} G_{\lambda} \big( S^{2} (j) \big) +  \sum_{k=1}^{T^{3}} G_{\lambda} \big( S^{3} (k) \big).
\end{equation*}
Here $G_{\lambda}(x)$ stands for Green's function at $x$ up to the killing time, that is, for  $x \in \mathbb{Z}^{4}$
$$G_{\lambda} (x) = \sum_{j=0}^{\infty} \lambda^{j} P (S_j = x), $$
where $S$ is a simple random walk in $\mathbb{Z}^{4}$ started at the origin.
Namely, $G^{\lambda}$ is a function of $S^{2}[0, T^{2}]$ and $S^{3}[0, T^{3}]$.

Then the last exit decomposition and the translation invariance give that 
\begin{equation}\label{equal1}
E_{2,3} \big( q J G^{\lambda} \big) = 1,
 \end{equation}
see  \cite[Theorem 3.6.1]{Lawb} for this. Here, note that in \eqref{equal1} the expectation $E_{2,3}$ is taken with respect to $S^{2}$ and $S^{3}$ as well as $T^{2}$ and $T^{3}$.

 We aim to pull $G^{\lambda}$ out of the expectation in \eqref{equal1} in the following proposition. For this, \cite[Lemma 4.2.1]{Lawb} shows that as $\lambda \to 1-$, 
\begin{align}
&E_{2,3} (G^{\lambda}) = - \frac{8}{\pi^{2}} \log (1-\lambda) + O (1), \label{ex-var1} \\
&\text{Var} (G^{\lambda}) = O \big( - \log (1- \lambda) \big). \label{ex-var2}
\end{align}
Here, we note that in \eqref{ex-var2} the variance $\text{Var}$ is taken with respect to $S^{2}$ and $S^{3}$ as well as $T^{2}$ and $T^{3}$.
It follows from this and a straightforward application of Chebyshev's inequality that $G^{\lambda}$ is accurately well approximated by its expectation as $\lambda \to 1-$.  Indeed, by combining several more refined estimates, we can establish the following proposition, which ensures that $E_{2,3} (qJ)$ is well approximated by $1/ E_{2,3} (G^{\lambda})$. 

\begin{proposition}\label{qJ}
As $\lambda \to 1-$, we have
\begin{equation*}
E_{2,3} (q J) = \frac{\pi^{2}}{8} \big[ - \log (1-\lambda) \big]^{-1} + O \Big( \big[ - \log (1 -\lambda ) \big]^{-  \frac{28}{25}} \Big). 
\end{equation*}
\end{proposition}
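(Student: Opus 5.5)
Write $\ell = -\log(1-\lambda)$ and $\mu = E_{2,3}(G^{\lambda})$, so that $\mu = \frac{8}{\pi^2}\ell + O(1)$ by \eqref{ex-var1}. The plan is to start from \eqref{equal1} and write
\begin{equation*}
1 = E_{2,3}(qJG^{\lambda}) = \mu\, E_{2,3}(qJ) + E_{2,3}\big(qJ(G^{\lambda}-\mu)\big).
\end{equation*}
This gives $E_{2,3}(qJ) = \mu^{-1} - \mu^{-1}E_{2,3}(qJ(G^{\lambda}-\mu))$. Since $\mu^{-1} = \frac{\pi^2}{8}\ell^{-1} + O(\ell^{-2})$, it suffices to show
\begin{equation*}
\big|E_{2,3}\big(qJ(G^{\lambda}-\mu)\big)\big| = O(\ell^{-3/25}).
\end{equation*}
Cauchy--Schwarz with \eqref{ex-var2} gives $|E(qJ(G^\lambda-\mu))| \le E(q^2)^{1/2}\,O(\ell^{1/2})$. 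This is good enough only if $E_{2,3}(q^2) = O(\ell^{-1-\epsilon'})$ with $\epsilon'$ large, and it is not: $q$ is typically of order $\ell^{-1/2}$ (the non-intersection exponent in $d=4$), so $E(q^2)\approx \ell^{-1}$. We therefore need a more refined splitting.

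\textbf{Step 1: a priori bounds on $q$.} We use the known results (Lawler) that $E(q)\asymp \ell^{-1}$ and, more usefully, that $q \le C\ell^{-1/2}$ fails only on an event of small probability. Concretely, the two-sided escape probability of $S^1$ from $S^2\cup S^3$ satisfies $E(q^p)\asymp \ell^{-\zeta_p}$ with $\zeta_1 = 1$ and $\zeta_p$ the intersection exponents in the critical dimension, $\zeta_2 > 1$ ($\zeta_2 = 3/2$ heuristically). We will only need a crude polynomial-in-$\ell$ gain, obtained as follows. Slice the killed paths dyadically into $\ell$ scales in $\log$-time. The conditional probability of avoiding $S^2\cup S^3$ across each scale is $1 - c/\ell$-ish, independent across well-separated scales. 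A large deviation estimate then shows $P(q \ge \ell^{-1/2+a})$ decays faster than any needed power.

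\textbf{Step 2: concentration of $G^\lambda$ on the relevant event.} Split
\begin{equation*}
G^\lambda - \mu = (G^\lambda - \mu)\mathbf 1\{|G^\lambda-\mu|\le \ell^{1/2+b}\} + (G^\lambda-\mu)\mathbf 1\{|G^\lambda-\mu|> \ell^{1/2+b}\}.
\end{equation*}
We need a sharper bound on the second, tail term than Chebyshev gives. We will use higher moments $E|G^\lambda-\mu|^{p} = O(\ell^{p/2})$, which follow by the same scale-decomposition that proves \eqref{ex-var2}: $G^\lambda$ is a sum of $\approx\ell$ weakly dependent scale contributions of order one. This makes the tail term $O(\ell^{-K})$ after multiplying by $q\le 1$.

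For the main term, $qJ$ is small where $G^\lambda$ is atypically large in the relevant direction. The key is to replace $G^\lambda$ by its contribution $G^\lambda_{\rm far}$ from scales beyond $\ell^{1-\theta}$ in log-scale. This part is nearly independent of the \emph{short-range} part of $q$, which is where most of the non-intersection cost of order $\ell^{-1}$ from the near scales is paid. Decompose $q \approx q_{\rm near}\, q_{\rm far}$ and $G^\lambda = G_{\rm near} + G_{\rm far}$, with $G_{\rm near}-E G_{\rm near}$ of size $\ell^{(1-\theta)/2}$. Then:
\begin{itemize}
\item Near-part error: $E(qJ)\,\ell^{(1-\theta)/2}\approx \ell^{-1/2-\theta/2}$, which is not good enough alone. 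So instead use that $qJ$ and $G_{\rm near}$ decorrelate up to an error controlled by Step 1.
\item The cleaner route is to iterate: apply the identity \eqref{equal1} at two values $\lambda$ and $\lambda'$ with $-\log(1-\lambda') = \ell^{1-\theta}$. Compare the factorized quantities, using near-independence of scales, with coupling errors bounded by $P(\text{paths at distant scales interact}) = O(\ell^{-\theta'})$.
\end{itemize}

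\textbf{Main obstacle.} The hard step is the decorrelation of $qJ$ from the fluctuation $G^\lambda-\mu$ with a power gain in $\ell$. This requires quantitative independence across logarithmic scales, with explicit error exponents. Optimizing the choices of $a$, $b$, $\theta$ against the losses from Step 1 and from the coupling is what produces an awkward exponent such as $3/25$, hence $28/25 = 1 + 3/25$ in the statement. I would first try Hölder with exponents $(p,p')$: bound $E(q^{p'})$ via Step 1 by $\ell^{-1-\kappa}$ for some $\kappa>0$, and $E|G^\lambda-\mu|^p \le C\ell^{p/2}$, then tune $p$. If this fails to beat $\ell^{-1}$, fall back on the near/far scale decomposition above.
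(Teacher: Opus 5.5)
Your reduction to proving $|E_{2,3}(qJ(G^\lambda-\mu))|=O(\ell^{-3/25})$ is correct, but the proposal never proves this bound. The step you call the main obstacle is left as a list of options: a scale-by-scale large deviation bound for $q$, near/far decoupling, and H\"{o}lder with unproven moments of $q$. None of them is carried out.

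The heuristics behind those options are also wrong. $q$ cannot typically be of order $\ell^{-1/2}$ when, as you yourself use, $E(q)\asymp\ell^{-1}$. The value $\ell^{-1/2}$ is the exponent for escaping a \emph{single} walk, not $S^2\cup S^3$. So your stated reason for discarding Cauchy--Schwarz is unfounded, although a bound like $E(q^2)=O(\ell^{-1-6/25})$ would itself be a nontrivial input.

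More importantly, you misplace the difficulty. The typical event needs no decorrelation at all. On $\{|G^\lambda-\mu|\le\ell^{1-\delta}\}$ the trivial bound gives a contribution of at most $\ell^{1-\delta}E_{2,3}(qJ)=O(\ell^{-\delta})$. This uses only the a priori bound $E_{2,3}(qJ)=O(\ell^{-1})$, which follows from \eqref{equal1} and Chebyshev. Likewise, your ``near-part error'' $\ell^{-1/2-\theta/2}$ is already far below your own target $\ell^{-3/25}$.

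The genuine issue is the atypical event $\{|G^\lambda-\mu|>\ell^{1-\delta}\}$. Its probability is bounded by \eqref{ex-var2} only as $O(\ell^{-1+2\delta})$. With $qJ\le 1$, that cannot absorb the factor $G^\lambda\approx\ell$: even H\"{o}lder with $E_{2,3}((G^\lambda)^9)^{1/9}=O(\ell)$ leaves $O(\ell^{1/9+16\delta/9})$. Your Step 2 handles this event only by asserting $E|G^\lambda-\mu|^p=O(\ell^{p/2})$. Already $p=4$ would close the argument, and that would be a legitimate alternative route. But this estimate is not among the available inputs and your sketch does not prove it.

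The missing idea, which the paper uses, is to gain an extra factor $\ell^{-1/2}$ on the deviation event from the independence of the two arms. Write $G^\lambda=G^{(2)}+G^{(3)}$. Let $A_i$ be the event that $G^{(i)}$ deviates from $\frac{a_{(4)}}{2}\ell$ by at least $\ell^{1-\delta}$, so $P(A_i)=O(\ell^{-1+2\delta})$ by Chebyshev.
\begin{itemize}
\item Since $A_2$ depends only on $S^2$, on $A_2$ you may bound $qJ$ by the probability that $S^1$ avoids $S^3$ alone. That quantity is independent of $A_2$ and has mean $O(\ell^{-1/2})$, by Cauchy--Schwarz from the two-sided estimate. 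Hence $E_{2,3}(qJ;A_2)=O(\ell^{-3/2+2\delta})$, and the same holds for $A_3$.
\item On $A_2^c\cap A_3^c$ one has $G^\lambda=a_{(4)}\ell+O(\ell^{1-\delta})$. Combined with $E_{2,3}(qJG^\lambda)=1$, this gives the upper bound.
\item For the lower bound, H\"{o}lder with $E_{2,3}((G^\lambda)^9)^{1/9}=O(\ell)$ and the bound above yields $E_{2,3}(qJG^\lambda;A_2\cup A_3)=O(\ell^{-1/3+16\delta/9})$.
\end{itemize}
Choosing $\delta=3/25$ balances all error terms at $\ell^{-28/25}$.
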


\begin{proof}
We will first consider an upper bound on $E_{2,3} (q J)$.  Write $G^{(2)} =  \sum_{j=0}^{T^{2}} G_{\lambda} \big( S^{2} (j) \big)$ and $G^{(3)} = \sum_{k=1}^{T^{3}} G_{\lambda} \big( S^{3} (k) \big)$ so that $G^{\lambda} = G^{(2)} + G^{(3)}$. Note that $G^{(2)}-1$ and $G^{(3)}$ are independent and identically distributed. Let 
\begin{equation*}
a_{(4)} \coloneqq \frac{8}{\pi^{2}}.
\end{equation*}

Take $\delta \in (0,1)$. For $i=2,3$, we define the event $A_{i}$ by 
$$ A_{i} = \Big\{ |G^{(i)} + \frac{a_{(4)}}{2} \log (1- \lambda) | \ge \big[ -   \log (1 -\lambda ) \big]^{1- \delta} \Big\}.$$
Combining \eqref{ex-var1} and \eqref{ex-var2}, Markov's inequality shows that 
\begin{equation}\label{ai}
P (A_{i} ) \le \frac{C}{ \big[ -   \log (1 -\lambda ) \big]^{1- 2 \delta}},
\end{equation}
for $i=2, 3$. For a random variable $X$ and an event $A$, if we write $E(X {\bf 1}_{A})$ as $E(X; A)$, \eqref{ai} yields that 
\begin{align*}
E_{2,3} (q J) &= E_{2,3} (q J ; A_{2} \cup A_{3} ) + E_{2,3} (q J ; A_{2}^{c} \cap A_{3}^{c} ) \\
&\le E_{2,3} (q J ; A_{2} \cup A_{3} )  + \Big\{ - a_{(4)} \log (1- \lambda) - 2 \big[ -   \log (1 -\lambda ) \big]^{1- \delta} \Big\}^{-1} E_{2,3} (qJ G^{\lambda}) \\
&= E_{2,3} (q J ; A_{2} \cup A_{3} )  + \Big\{ - a_{(4)} \log (1- \lambda) - 2 \big[ -   \log (1 -\lambda ) \big]^{1- \delta} \Big\}^{-1} \\
&\le E_{2,3} (q J ; A_{2}  ) + E_{2,3} (q J ; A_{3} ) + \frac{\pi^{2}}{8} \big[ - \log (1-\lambda) \big]^{-1} + O \Big( \big[ - \log (1 -\lambda ) \big]^{-1-\delta} \Big).
\end{align*}
However, since the event $A_{2}$ is independent of $S^{1}$ and $S^{3}$, we see that 
\begin{align*}
E_{2,3} (qJ ; A_{2} ) &\le P \big( S^{1} (0, T^{1} ] \cap S^{3} [0, T^{3} ] = \emptyset \big) P (A_{2} ) \\
&\le C \big[ - \log (1- \lambda) \big]^{-\frac{1}{2}}  \big[ -   \log (1 -\lambda ) \big]^{-1 +  2 \delta} \\
&= C \big[ - \log (1- \lambda) \big]^{-\frac{3}{2} + 2 \delta}, 
\end{align*}
where we used \cite[Corollary 4.2.3]{Lawb} and the Cauchy-Schwarz inequality to show that 
$$ P \big( S^{1} (0, T^{1} ] \cap S^{3} [0, T^{3} ] = \emptyset \big) \le C \big[ - \log (1- \lambda) \big]^{-\frac{1}{2}}.$$
Since a similar estimate holds for $E_{2,3} (q J ; A_{3} )$,  we can conclude that 
\begin{equation}\label{Upper}
E_{2,3} (q J) \le \frac{\pi^{2}}{8} \big[ - \log (1-\lambda) \big]^{-1} + O \Big( \big[ - \log (1 -\lambda ) \big]^{-1-\delta} \Big) + O \Big( \big[ - \log (1 -\lambda ) \big]^{- \frac{3}{2} + 2 \delta} \Big).
\end{equation}

As for the lower bound on $E_{2,3} (q J)$, since $0 \le q J {\bf 1}_{A_{2} \cup A_{3}} \le 1$, H\"{o}lder's inequality gives that 
\begin{align*}
E_{2,3} (q J G^{\lambda} {\bf 1}_{A_{2} \cup A_{3}} ) &\le \big\{ E_{2,3} ((G^{\lambda})^{9}) \big\}^{\frac{1}{9}} \big\{ E_{2,3} (q J {\bf 1}_{A_{2} \cup A_{3}} ) \big\}^{\frac{8}{9}} \\
& \le C \big[ - \log (1- \lambda) \big]^{-\frac{4}{3} +  \frac{16 \delta}{9}} \big\{ E_{2,3} ((G^{\lambda})^{9}) \big\}^{\frac{1}{9}}.
\end{align*}
On the other hand, as claimed in \cite[line -10 in page 119]{Lawb}, a routine calculation shows that 
$$ \big\{ E_{2,3} ((G^{\lambda})^{9}) \big\}^{\frac{1}{9}} = O \big( - \log (1- \lambda) \big). $$
(The details are left to the reader.) This gives 
$$ E_{2,3} (q J G^{\lambda} {\bf 1}_{A_{2} \cup A_{3}} ) \le C \big[ - \log (1- \lambda) \big]^{-\frac{1}{3} +  \frac{16 \delta}{9}}.$$
Therefore, since $E_{2,3} (qJG^{\lambda}) = 1$, it follows that 
\begin{align*}
1 - O \Big( \big[ - \log (1- \lambda) \big]^{-\frac{1}{3} +  \frac{16 \delta}{9}} \Big) &= E_{2,3} (q J G^{\lambda} ; A_{2}^{c} \cap A_{3}^{c} ) \\
&\le \Big\{ - a_{(4)} \log (1- \lambda) + 2 \big[ -   \log (1 -\lambda ) \big]^{1- \delta} \Big\} E_{2,3} (q J).
\end{align*}
So we have
\begin{equation}\label{Lower}
E_{2,3} (qJ) \ge \frac{\pi^{2}}{8} \big[ - \log (1-\lambda) \big]^{-1} - O \Big(  \big[ - \log (1 -\lambda ) \big]^{-1-\delta} \Big) - O \Big( \big[ - \log (1 -\lambda ) \big]^{- \frac{4}{3} + \frac{16 \delta}{9}} \Big).
\end{equation}
To optimize these estimates, by choosing $\delta = \frac{3}{25}$, all error terms $O$ in \eqref{Upper} and \eqref{Lower} can be bounded from above by 
$$O \Big(  \big[ - \log (1 -\lambda ) \big]^{- \frac{28}{25} } \Big).$$
This finishes the proof.
\end{proof}

Let 
\begin{equation*}
F_{n} \coloneqq \big\{ S^{1} (0, n] \cap \big( S^{2}[0, n] \cup S^{3} [0, n] \big) = \emptyset, \ \ 0 \not\in S^{3} (0, n] \big\},
\end{equation*}
be the non-intersection event for three independent random walks up to $n$ steps.

\begin{corollary}\label{Fn}
As $n \to \infty$, we have 
\begin{equation*}
P(F_{n} ) = \frac{\pi^{2}}{8} (\log n)^{-1} + O \Big( (\log n)^{-\frac{28}{25}} \Big).
\end{equation*}
\end{corollary}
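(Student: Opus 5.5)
The plan is to pass from the killed walks in Proposition \ref{qJ} to fixed-length walks: compare $P(F_n)$ with $E_{2,3}(qJ)$ at a killing parameter $\lambda$ with $1-\lambda$ slightly off $1/n$, on both sides. Note first that $E_{2,3}(qJ) = P(F^{\lambda})$, where $F^{\lambda}$ is the event in $F_n$ with the three time horizons replaced by $T^1,T^2,T^3$. Also $P(F_n)$ is non-increasing in $n$, since $F_{n+1}\subset F_n$. I will take $\lambda_\pm$ with $1-\lambda_- = n^{-1}(\log n)^{-a}$ and $1-\lambda_+ = n^{-1}(\log n)^{a}$ for a suitable $a>0$. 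For both choices $-\log(1-\lambda_\pm) = \log n + O(\log\log n)$, so Proposition \ref{qJ} gives
\begin{equation*}
E_{2,3}(qJ)\big|_{\lambda_\pm} = \frac{\pi^2}{8}(\log n)^{-1} + O\big((\log n)^{-\frac{28}{25}}\big),
\end{equation*}
because $(\log n)^{-2}\log\log n$ is negligible here.

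\emph{Upper bound.} Work with $\lambda_-$. On the event $\{T^1,T^2,T^3\ge n\}$, monotonicity in the time horizons gives $F^{\lambda_-}\supset F_n$, and this event is independent of the walks. Hence
\begin{equation*}
P(F^{\lambda_-}) \ge P(F_n)\,P(T^1\ge n)^3 = P(F_n)\,\lambda_-^{3n}.
\end{equation*}
Since $\lambda_-^{3n} = 1-O((\log n)^{-a})$, we get $P(F_n)\le P(F^{\lambda_-})(1+O((\log n)^{-a}))$. Any $a\ge 3/25$ then gives $P(F_n)\le \frac{\pi^2}{8}(\log n)^{-1}+O((\log n)^{-28/25})$.

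\emph{Lower bound.} Work with $\lambda_+$. Split $P(F^{\lambda_+})$ according to whether all $T^i\le n$. On $\{T^i\le n \ \forall i\}$, $F^{\lambda_+}$ only loosens the constraint relative to $F_n$, in the wrong direction. So I will bound instead
\begin{equation*}
P(F^{\lambda_+}) \le P(F_n) + P\big(F^{\lambda_+}\cap\{T^i\le n\ \forall i\}\setminus F_n\big) + P(\exists i: T^i>n).
\end{equation*}
The last term is $3\lambda_+^{n}\approx 3e^{-(\log n)^a}$, which is negligible once $a>1$ (take e.g. $a=2$, still giving $-\log(1-\lambda_+)=\log n+O(\log\log n)$).

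The middle term is the main obstacle. It is the event that the walks avoid each other up to the killing times $T^i$, which are typically of order $n(\log n)^{-2}$, but intersect before time $n$. I would bound it by conditioning on the $T^i$ and using that $P(F_m)$ for $m=\min_i T^i$ exceeds $P(F_n)$ by a relative amount of order $\log(n/m)/\log n$. Concretely, I would use the cruder a priori estimate $P(F_m)-P(F_n)\le C\log(n/m)(\log n)^{-2}$ for $m\le n$. This follows from Proposition \ref{qJ} itself, since the leading terms at two scales differ by $\frac{\pi^2}{8}\big[(\log m)^{-1}-(\log n)^{-1}\big]$.

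An alternative that avoids circularity is to use monotonicity directly, $P(F_n)\ge P(F_{n'})$ for all $n'\ge n$. The plan is then to write
\begin{equation*}
P(F^{\lambda}) = \sum_{m} P(F_m)\,P(\min_i T^i = m)
\end{equation*}
up to the horizons being unequal. Handle unequal horizons by sandwiching with equal horizons $\min_i T^i$ and $\max_i T^i$. Then choose $\lambda$ with $1-\lambda = n^{-1}(\log n)^{-a}$, so that $\min_i T^i\ge n$ except with probability $O((\log n)^{-a})$. On that complement, use the trivial bound $P(F_m)\le C(\log m)^{-1}$, restricting to $m\ge n^{1/2}$ via another small-probability cut. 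This yields the lower bound on $P(F_n)$, since $P(F_m)\le P(F_n)$ for $m\ge n$, and the error terms match $O((\log n)^{-28/25})$ for $a\ge 3/25$.
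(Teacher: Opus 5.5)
There is a genuine gap: your upper bound rests on a reversed inclusion. On $\{T^1,T^2,T^3\ge n\}$ the killed paths $S^i[0,T^i]$ contain the length-$n$ paths, so avoidance up to the killing times is the \emph{stronger} condition. Thus $F^{\lambda_-}\cap\{T^i\ge n\ \forall i\}\subset F_n$, not $\supset$. Hence $P(F^{\lambda_-})\ge P(F_n)\lambda_-^{3n}$ is unjustified. What $\lambda_-$ actually gives is $P(F^{\lambda_-})\le P(F_n)+P(\min_i T^i<n)$, i.e.\ a \emph{lower} bound on $P(F_n)$.

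To bound $P(F_n)$ from above, you need killing times shorter than the horizon. This is exactly the inclusion you wrote down and discarded as ``the wrong direction'' in your lower-bound paragraph. Take $1-\lambda_+=n^{-1}(\log n)^{2}$. Then $F_n\subset F^{\lambda_+}$ on $\{\max_i T^i\le n\}$, so $P(F_n)\le P(F^{\lambda_+})+3\lambda_+^{n}\le P(F^{\lambda_+})+3e^{-(\log n)^2}$. Proposition~\ref{qJ} with $-\log(1-\lambda_+)=\log n-2\log\log n$ finishes. This is the paper's upper bound: it compares $\lambda_n=1-n^{-1}$ with the longer horizon $n(\log n)^2$ and then reparametrizes. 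In short, your two halves are attached to the wrong killing parameters.

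For the lower bound, your first route fails. The estimate $P(F_m)-P(F_n)\le C\log(n/m)(\log n)^{-2}$ cannot be read off from Proposition~\ref{qJ}. That proposition's error $O((\log n)^{-28/25})$ is far larger than the difference $\frac{\pi^2}{8}[(\log m)^{-1}-(\log n)^{-1}]$ of the leading terms. It also concerns killed rather than fixed-time walks, so invoking it for $P(F_m)$ is circular.

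Your alternative route is correct and is essentially the paper's: with $1-\lambda=n^{-1}(\log n)^{-a}$, independence and monotonicity give $P(F^{\lambda})\le P(F_n)+P(\min_i T^i<n)$. You do not need the bound $P(F_m)\le C(\log m)^{-1}$ on the bad event; it is not trivial and would have to be imported from the literature. Simply take $a\ge 28/25$ (the paper effectively uses $a=2$). Then $P(\min_i T^i<n)\le 3n(1-\lambda)=3(\log n)^{-a}$ is already absorbed into the error term.
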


\begin{proof}
Write 
\begin{equation*}
F_{\lambda} \coloneqq \big\{ S^{1} (0, T^{1}] \cap \big( S^{2}[0, T^{2}] \cup S^{3} [0, T^{3}] \big) = \emptyset, \ \ 0 \not\in S^{3} (0, T^{3}] \big\},
\end{equation*}
and set $\lambda_{n} = 1 - n^{-1}$.  Note that if $T$ is the killing time with killing rate $\lambda_{n}$, by a simple estimate for the geometric distribution
\begin{equation*}
P \big( T \ge n (\log n)^{2} \big) \le C e^{-c (\log n)^{2}} \ \ \text{ and }  \ \   P \big( T \le n (\log n)^{-2} \big) \le (\log n)^{-2}, 
\end{equation*}
for some universal constants $c, C \in (0, \infty)$.
Thus, using Proposition \ref{qJ} and the monotonicity of $F_n$ with respect to $n$, we have 
\begin{align*}
\frac{\pi^{2}}{8} ( \log n)^{-1} + O \Big(   (\log n )^{- \frac{28}{25} } \Big) &= P ( F_{\lambda_{n}} )  \\
&\ge P \big( T^{1} \vee T^{2} \vee T^{3} \le n (\log n)^{2}, \  F_{n (\log n)^{2}} \big) \\
&\ge P \big( F_{n (\log n)^{2}} \big) - C e^{-c (\log n)^{2}}.
\end{align*}
Reparametrizing this, we have 
$$ P ( F_{n} ) \le \frac{\pi^{2}}{8} ( \log n)^{-1} + O \Big(   (\log n )^{- \frac{28}{25} } \Big).$$
On the other hand, by Proposition \ref{qJ} and the monotonicity of $F_n$ with respect to $n$ again, it follows that 
\begin{align*}
\frac{\pi^{2}}{8} ( \log n)^{-1} - O \Big(   (\log n )^{- \frac{28}{25} } \Big) &= P ( F_{\lambda_{n}} )  \\
&\le P \big( T^{1} \wedge T^{2} \wedge T^{3} \ge n (\log n)^{-2}, \  F_{n (\log n)^{-2}} \big) + 3 (\log n)^{-2} \\
&\le P \big( F_{n (\log n)^{-2}} \big) + 3 (\log n)^{-2}.
\end{align*}
This gives
$$P ( F_{n} ) \ge \frac{\pi^{2}}{8} ( \log n)^{-1} - O \Big(   (\log n )^{- \frac{28}{25} } \Big),$$
which finishes the proof.
\end{proof}

Take $k \in \mathbb{N}$. In the next proposition, we will give a sharp asymptotic expression for 
\begin{equation*}
f(n;k) \coloneqq P \big( S^{1} [0, n] \cap S^{2} [kn, (k+1)n ] \neq \emptyset \big).
\end{equation*}

\begin{proposition}\label{fnk}
The following equation holds for all $n, k \ge 2$.
\begin{align*}
f (n;k) = \frac{1}{2 \log n} \, \log \Big[ 1 + \frac{1}{k^{2} +2k } \Big] \,  \bigg\{ 1 + O \bigg( \frac{1}{k(\log n)^{\frac{3}{25}}} \bigg) \bigg\},
\end{align*}
where the constants used in $O$ are universal and do not depend on $n$ and $k$. 
\end{proposition}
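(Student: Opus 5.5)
The plan is to follow the last-exit-decomposition proof of \cite[Theorem 4.3.6]{Lawb}. We feed in Corollary \ref{Fn} in place of the soft $o(1)$ asymptotics and track every error term uniformly in $k$. The main term will come from an expected number of intersections multiplied by a three-walk non-intersection probability. Throughout, write $p_m(x)=P(S_m=x)$ and $W=[kn,(k+1)n]$.

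\emph{Step 1 (decomposition).} On the event $\{S^{1}[0,n]\cap S^{2}[kn,(k+1)n]\neq\emptyset\}$, let
\[
j^{*}=\max\{j\in W : S^{2}(j)\in S^{1}[0,n]\},
\]
and let $i^{*}$ be the first time $i\le n$ with $S^{1}(i)=S^{2}(j^{*})$. Summing over the values $(i,j)$ of $(i^{*},j^{*})$ and over the common point $x$, and using the Markov property at times $i$ and $j$, seen from $x$ we obtain three independent walks:
\begin{itemize}
\item the future of $S^{2}$ after $j$, of length $(k+1)n-j$;
\item the future of $S^{1}$ after $i$, of length $n-i$;
\item the reversed past of $S^{1}$ before $i$, of length $i$.
\end{itemize}
The constraints are exactly of the form of $F$: the $S^{2}$-future avoids both $S^{1}$-arms, and the reversed past does not return to $x$. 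The only coupling is that the two $S^{1}$-arms meet $S^{2}$ at distance of order $\sqrt{kn}$ from $x$. This gives
\[
f(n;k)=\sum_{i\le n}\ \sum_{j\in W}\ \sum_{x} p_i(x)\,p_j(x)\,P\big(F_{i,\,n-i,\,(k+1)n-j}\ \big|\ \text{endpoints}\big),
\]
where $F_{a,b,c}$ denotes the non-intersection event $F$ with arm lengths $a,b,c$.

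\emph{Step 2 (replacing the conditional probability by $P(F_n)$).} First, restrict to $i,\,n-i,\,(k+1)n-j\ge n(\log n)^{-2}$. The discarded boundary strips contribute $O(k^{-2}(\log n)^{-2})$, because for $i+j\ge kn$ we have $\sum_x p_i(x)p_j(x)=p_{i+j}(0)\le C(kn)^{-2}$. On the remaining range, monotonicity in the arm lengths sandwiches the three-arm probability between $P(F_{n(\log n)^{-2}})$ and $P(F_{n})$. By Corollary \ref{Fn}, both equal $\frac{\pi^{2}}{8}(\log n)^{-1}\{1+O((\log n)^{-3/25})\}$.

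Second, deal with the conditioning on the far endpoints. The arms only interact with the far part of the other walk at spatial scale $\sqrt{kn}$. That extra intersection costs a factor $O((\log n)^{-1})$ times a Green's function ratio, which decays like $1/k$ relative to the main term. The result should be a factorisation
\[
P(\cdot\mid\cdot)=P(F_n)\,\big\{1+O\big(k^{-1}(\log n)^{-3/25}\big)\big\}.
\]

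\emph{Step 3 (the explicit sum).} By the local CLT, $p_m(0)=\frac{4}{\pi^{2}m^{2}}\big(1+O(m^{-1})\big)$ after averaging over parity. Comparing the double sum with an integral gives
\[
\sum_{i\le n}\sum_{j\in W}p_{i+j}(0)=\frac{4}{\pi^{2}}\log\frac{(k+1)^{2}}{k(k+2)}+O\big(n^{-1}\big).
\]
Multiplying by $\frac{\pi^{2}}{8}(\log n)^{-1}$ yields the main term $\frac{1}{2\log n}\log\big[1+\frac{1}{k^{2}+2k}\big]$. Since this is of order $k^{-2}(\log n)^{-1}$, every error term must be shown to be $O\big(k^{-3}(\log n)^{-1-3/25}\big)$.

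\emph{Main obstacle.} The hard part is obtaining a relative error of the form $k^{-1}(\log n)^{-3/25}$ uniformly in $k$, rather than merely $(\log n)^{-3/25}$. Corollary \ref{Fn} by itself only yields the latter. My first attempt would be to write $\log[1+1/(k^{2}+2k)]=k^{-2}-2k^{-3}+\dots$. I would then show that the leading $k^{-2}$ part of the error cancels exactly between the upper and lower sandwich bounds, because both use the same $P(F_n)$ and the same $\sum p_{i+j}(0)$. This would leave only the $k$-dependent decoupling error from Step 2, which carries the extra $1/k$ factor.

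If that cancellation fails, I would split into two cases. For $k\le(\log n)^{3/25}$ the required bound follows from Corollary \ref{Fn} alone, so the difficulty is concentrated in large $k$. For large $k$, one would have to show that the dependence of the non-intersection probability on the outer scale $kn$ (via $\log(kn)$ versus $\log n$) enters only at the order $k^{-3}$.
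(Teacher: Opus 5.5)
Your Steps 1 and 3 follow the paper's skeleton. You decompose at an intersection point into three arms of length at most $n$ plus a long path pinned at its endpoint, insert Corollary \ref{Fn} for the three-arm probability, and evaluate $\sum p_{i+j}(0)$ by the local CLT to get $\frac{4}{\pi^{2}}\log\frac{(k+1)^{2}}{k(k+2)}$. (The paper decomposes at the first hit $\tau$ of $S^{1}$ and the first visit $\sigma$ of $S^{2}$ to $S^{1}(\tau)$ rather than at a last exit; that difference is immaterial.)

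\textbf{Step 2 is asserted, not proved.} What must be decoupled is the local non-intersection event from the pinning of the endpoint, not an extra intersection at scale $\sqrt{kn}$. Monotonicity in the arm lengths says nothing about probabilities that carry this pinning. The paper applies the Markov property to all three arms at the short time $\alpha_{n}=n(\log n)^{-2a}$, $a=10$.
\begin{itemize}
\item For the upper bound it drops the constraints after $\alpha_{n}$. This leaves $P(F_{\alpha_{n}})$ times $\max_{x:\,0\leftrightarrow x}P(S(j+l-2\alpha_{n})=x)\le P(S(j+l)=0)\{1+O((\log n)^{-2a}/k)\}$.
\item For the lower bound it shows, jointly with the pinning event $\{S^{1}(j)=S^{3}(l)\}$, that non-intersection up to time $\alpha_{n}$ fails to extend to the full arms only with probability $O(k^{-2}n^{-2}(\log\log n)^{4}(\log n)^{-2})$. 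This is items (i)--(iv), via \cite[Lemma 5.4]{slowly}. It also controls the positions at time $\alpha_{n}$ so that the local CLT applies uniformly.
\end{itemize}
The only factor $1/k$ that arises is the one on $(\log n)^{-2a}$ in the local CLT ratio. The boundary strips, the lower-bound correction and Corollary \ref{Fn} all give relative errors that do not decay in $k$.

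\textbf{Your ``main obstacle'' paragraph fails, and it should be dropped rather than repaired.}
\begin{itemize}
\item Nothing cancels. The discrepancy between the three-arm probability and $\frac{\pi^{2}}{8}(\log n)^{-1}$ multiplies the whole main term, so it is a relative error independent of $k$. Building the upper and lower bounds from the same quantities makes them close to each other, not to $\frac{1}{2\log n}\log[1+\frac{1}{k^{2}+2k}]$.
\item The case split also fails. For unbounded $k\le(\log n)^{3/25}$, a relative error $(\log n)^{-3/25}$ is not $O(k^{-1}(\log n)^{-3/25})$.
\end{itemize}
The paper's proof does not produce a $1/k$ in front of $(\log n)^{-3/25}$ either. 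It ends with $\{1+O((\log n)^{-3/25})\}\{1+O((\log n)^{-2a}/k)\}$, and Propositions \ref{upper} and \ref{lower} use only a $k$-uniform relative error of this type.

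In fact the literal form cannot hold uniformly in $k$. Write $\tilde S(m)=S^{2}(kn+m)-S^{2}(kn)$. Then $f(n;k)=\sum_{z}P(S^{2}(kn)=z)\,P(z\in S^{1}[0,n]-\tilde S[0,n])$, where the last set is the set of differences $a-b$. For $n=2$ the local CLT gives, as $k\to\infty$,
\[
k^{2}f(2;k)\to\frac{2}{\pi^{2}}\,E\,\sharp\{z\in S^{1}[0,2]-\tilde S[0,2]:\ z_{1}+\cdots+z_{4}\ \text{even}\}=\frac{2}{\pi^{2}}\cdot\frac{2075}{512}\approx 0.82 .
\]
The displayed estimate with universal constants would instead force the limit $\frac{1}{2\log 2}\approx 0.72$. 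So aim for the $k$-uniform version; your Steps 1--3 give it once Step 2 is carried out as above.
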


\begin{proof}
In what follows, we modify the method developed in \cite[Section 4.3]{Lawb} to suit our purposes. To this end, we begin by looking at the ``first intersection" of $S^{1}$ and $S^{2}$ as follows. Let 
\begin{align*}
&\tau \coloneqq \inf \{ j \ge 0 \ | \ S^{1} (j) \in S^{2} [kn, (k+1)n ] \}, 
\\
&\sigma \coloneqq \inf \{ l \ge kn \ | \ S^{2} (l) = S^{1} (\tau) \}. 
\end{align*}
Writing $\Gamma (j, l) \coloneqq P (\tau = j, \sigma = l)$, we have 
\begin{equation}\label{eq:sumgamma}
    f(n;k) = P (\tau \le n ) = \sum_{j=0}^{n} \sum_{l=kn}^{(k+1)n} \Gamma (j,l).
\end{equation}

Translating $S^{1} (\tau) = S^{2} (\sigma)$ to the origin and reversing both $S^{1} [0, \tau]$ and $ S^{2}[0, \sigma]$, we see that 
\begin{align*}
&\Gamma (j,l) = P \Big( S^{1} (0, j] \cap \big( S^{2} [0, (k+1)n -l ] \cup S^{3} [0,  l-kn]  \big) = \emptyset, \\
& \ \ \ \ \ \ \ \ \ \ \ \ \ \ \ \ \ \ \ \ \ \ 0 \not\in S^{3} (0,  l-kn], \ S^{1} (j) = S^{3} (l) \Big).
\end{align*}
We will show that by choosing $a > 0$ appropriately, the case that $j \wedge ( l-kn ) \wedge \{ (k+1)n -l \}  \le n (\log n)^{-a}$ is negligible. If all of these three times are larger than $n (\log n)^{-a}$, we can apply Corollary \ref{Fn} to show that the probability of the event 
$$A^1:=\Big\{  S^{1} (0, j] \cap \big( S^{2} [0, (k+1)n -l ] \cup S^{3} [0,  l-kn]  \big) = \emptyset, \ 0 \not\in S^{3} (0,  l-kn] \Big\},$$
is bounded above by
$$\frac{\pi^{2}}{8} (\log n)^{-1} + O \Big( (\log n)^{-\frac{28}{25}} \Big).$$

Suppose that $j \wedge ( l-kn ) \wedge \{ (k+1)n -l \}  \ge n (\log n)^{-a}$ and $j + l$ is even. For $x = (x_{1}, \cdots, x_{4}) \in \mathbb{Z}^{4}$, we write $0 \leftrightarrow x$ if $x_{1} + \cdots x_{4}$ is even. For $j, l \in \mathbb{Z}$, we also write $j \leftrightarrow l$ if $j+l$ is even. Then the Markov property at time $n (\log n)^{-2a}$ (assuming that $n (\log n)^{-2a} \in \mathbb{Z}$) yields that 
\begin{align*}
&\Gamma (j,l) \\
&\le P \Big( S^{1} (0, n (\log n)^{-2a}] \cap \big( S^{2} [0, n (\log n)^{-2a} ] \cup S^{3} [0,  n (\log n)^{-2a}]  \big) = \emptyset, \\
& \ \ \ \ \ \ \  \ \ \ \ 0 \not\in S^{3} (0,  n (\log n)^{-2a}] \Big)   \times \max_{x \in \mathbb{Z}^{4}, \, 0 \leftrightarrow x} P \Big( S \big( j + l - 2n (\log n)^{-2a} \big) = x \Big) \\
&\le P \Big( S^{1} (0, n (\log n)^{-2a}] \cap \big( S^{2} [0, n (\log n)^{-2a} ] \cup S^{3} [0,  n (\log n)^{-2a}]  \big) = \emptyset, \\
& \ \ \ \ \ \ \ \ \  \ \ 0 \not\in S^{3} (0,  n (\log n)^{-2a}] \Big)  \times P \big( S(j+l) =0 \big) \, \Big\{ 1 + O \Big[ \frac{(\log n)^{-2a}}{k} \Big] \Big\} \\
&= \Big\{ \frac{\pi^{2}}{8} (\log n)^{-1} + O \Big( (\log n)^{-\frac{28}{25}} \Big) \Big\} \, \frac{8}{\pi^{2}} (j+l)^{-2} \, \Big\{ 1 + O \Big[ \frac{(\log n)^{-2a}}{k} \Big] \Big\} \\
&= (\log n)^{-1} (j+l)^{-2}  \, \Big\{ 1 + O \Big( (\log n)^{-\frac{3}{25}} \Big) \Big\} \, \Big\{ 1 + O \Big[ \frac{(\log n)^{-2a}}{k} \Big] \Big\},
\end{align*} 
where we used  \cite[Proposition 1.2.5]{Lawb} to see that  
\begin{align*}
\max_{x \in \mathbb{Z}^{4}, \, 0 \leftrightarrow x} P \Big( S \big( j + l - 2n (\log n)^{-2a} \big) = x \Big) &\le  P \big( S(j+l) = 0 \big) \, \Big\{ 1 + O \Big[ \frac{(\log n)^{-2a}}{k} \Big] \Big\} \\
&= \frac{8}{\pi^{2}} (j+l)^{-2} \, \Big\{ 1 + O \Big[ \frac{(\log n)^{-2a}}{k} \Big] \Big\}.
\end{align*}
For the sum in \eqref{eq:sumgamma}, it suffices to consider only the pairs $(j, l)$ for which $j \leftrightarrow l$. Thus, by choosing $a=10$ and taking the sum,  we have 

\begin{align*}
f (n;k) &\le \frac{1}{2 \log n} \, \log \Big[ 1 + \frac{1}{k^{2} +2k } \Big] \, \Big\{ 1 + O \Big( (\log n)^{-\frac{3}{25}} \Big) \Big\} \, \Big\{ 1 + O \Big[ \frac{(\log n)^{-2a}}{k} \Big] \Big\} \\
& \ \ \ \ + \sum \, \Gamma(j,l) \\
&\le  \frac{1}{2 \log n} \, \log \Big[ 1 + \frac{1}{k^{2} +2k } \Big] \, \Big\{ 1 + O \Big( (\log n)^{-\frac{3}{25}} \Big) \Big\} \, \Big\{ 1 + O \Big[ \frac{(\log n)^{-2a}}{k} \Big] \Big\} \\
&\ \ \ \  \ + O \Big( \frac{ (\log n)^{-a} }{k^{2}} \Big) \\
&= \frac{1}{2 \log n} \, \log \Big[ 1 + \frac{1}{k^{2} +2k } \Big] \,  \Big\{ 1 + O \Big( (\log n)^{-\frac{3}{25}} \Big) \Big\} \, \Big\{ 1 + O \Big[ \frac{(\log n)^{-2a}}{k} \Big] \Big\},
\end{align*}
where the sum $\sum$ appearing on the right-hand side of the first inequality above is taken over all pairs $(j,k)$ satisfying $j \wedge ( l-kn ) \wedge \{ (k+1)n -l \}  \le n (\log n)^{-a}$. In the second inequality, we used both the fact that the number of such pairs is at most $C n^{2} (\log n)^{-a}$ and that $\Gamma(j,l) \le C (kn)^{-2}$ holds for these pairs $(j,l)$.
This gives an improved upper bound of \eqref{LRI}. 

We will next deal with a lower bound. For that purpose, we define 
\begin{align*}
A^1_{n}:&=\Big\{  S^{1} (0, n (\log n)^{-2a}] \cap \big( S^{2} [0,  n (\log n)^{-2a}] \cup S^{3} [0,  n (\log n)^{-2a}]  \big) = \emptyset, \\
& \ \ \ \ \ \ \  \ \ 0 \not\in S^{3} (0,  n (\log n)^{-2a}] \Big\},
\end{align*}
and $A^{2}_{j,l} = \{ S^{1} (j) = S^{3} (l) \}$. Suppose that $j \wedge ( l-kn ) \wedge \{ (k+1)n -l \}  \ge n (\log n)^{-a}$ and $j \leftrightarrow l$. We need to estimate 
$$P \big( A^1_{n} \cap (A^{1})^{c} \cap A^{2}_{j,l} \big).$$

To do it, setting $\alpha_{n} = n (\log n)^{-2a}$, we note that the following inequalities hold.
\begin{align*}
 &\text{(i)} \ P \Big( 0 \in S^{3} [\alpha_{n}, l- kn], \, A^{2}_{j,l} \Big) \le C \frac{ (\log n)^{4a}}{k^{2} n^{3}}, \\
&\text{(ii)} \ P \Big( A^{1}_{n}, \, S^{1} (0, \alpha_{n}] \cap S^{2} [\alpha_{n}, (k+1)n-l] \neq \emptyset, \, A^{2}_{j,l} \Big) \\
& \  \ \ \ \ \ \le P \Big( A^{1}_{n}, \, S^{1} (0, \alpha_{n}] \cap S^{2} [\alpha_{n}, (k+1)n-l] \neq \emptyset \Big) \max_{x \in \mathbb{Z}^{4}} P \big( S^{3} (l- \alpha_{n} ) = x \big) \\
&\ \ \ \ \  \ \le \frac{C}{k^{2} n^{2}} \frac{(\log \log n)^{4}}{(\log n)^{2}} \ \ \  (\text{this inequality follows from   \cite[Lemma 5.4]{slowly}}), \\
&\text{(iii)} \  P \Big( A^{1}_{n}, \, S^{2} [0, \alpha_{n}] \cap S^{1} [\alpha_{n}, j] \neq \emptyset, \, A^{2}_{j,l} \Big) \le \frac{C}{k^{2} n^{2}} \frac{(\log \log n)^{4}}{(\log n)^{2}} \ \ \ \  \\ 
& \ \ \ \ \ \ \ \ \ \ \ \ \ (\text{for the same reason as in (ii)}), \\
&\text{(iv)} \ P \Big( A^{1}_{n}, \, S^{1} [\alpha_{n}, j] \cap S^{2} [\alpha_{n}, (k+1)n -l] \neq \emptyset, \, A^{2}_{j,l} \Big) \le \frac{C}{k^{2} n^{2}} \frac{(\log \log n)^{4}}{(\log n)^{2}} \\
& \ \ \ \ \ \ \ \ \ \ \ \ \ (\text{for the same reason as in (ii)}).
\end{align*}
By a similar argument, we obtain the following inequality.
$$P \Big( A^{1}_{n} \cap \big\{ S^{1} (0, j] \cap S^{3} [0, l-kn] \neq \emptyset \big\} \cap A^{2}_{j,l} \Big) \le \frac{C}{k^{2} n^{2}} \frac{(\log \log n)^{4}}{(\log n)^{2}}.$$
Combining these estimates, we have 
$$P \big( A^1_{n} \cap (A^{1})^{c} \cap A^{2}_{j,l} \big) \le \frac{C}{k^{2} n^{2}} \frac{(\log \log n)^{4}}{(\log n)^{2}}.$$

Therefore, for $(j,l)$ satisfying that $j \wedge ( l-kn ) \wedge \{ (k+1)n -l \}  \ge n (\log n)^{-a}$ and $j \leftrightarrow l$, we have
\begin{align*}
\Gamma(j,l) = P (A^{1} \cap A^{2}_{j,l}) &= P (A^{1}_{n} \cap A^{2}_{j,l} ) - P \big( A^1_{n} \cap (A^{1})^{c} \cap A^{2}_{j,l} \big) \\
&\ge  P (A^{1}_{n} \cap A^{2}_{j,l} ) - \frac{C}{k^{2} n^{2}} \frac{(\log \log n)^{4}}{(\log n)^{2}}.
\end{align*}
Recall that we take $a =10$. Since the probability of the event that 
$$|S^{1} (\alpha_{n})| \wedge |S^{3} (\alpha_{n})| \ge \sqrt{n} (\log n)^{-\frac{a}{2}},$$
 is smaller than $ C n^{-4}$, using the Markov property at time $\alpha_{n}$ and \cite[Proposition 1.2.5]{Lawb} again, we see that for $(j,l)$ satisfying that $j \wedge ( l-kn ) \wedge \{ (k+1)n -l \}  \ge n (\log n)^{-a}$ and $j \leftrightarrow l$,
 \begin{align*}
  P (A^{1}_{n} \cap A^{2}_{j,l} ) &\ge P (A^{1}_{n} ) \times \min  P \Big( S \big( j + l - 2n (\log n)^{-2a} \big) = x \Big)  - C n^{-4} \\
  &\ge P (A^{1}_{n} ) P \big( S(j+l) = 0 \big) \Big\{ 1 - O \Big( (\log n)^{-a} \Big) - O \Big[ \frac{(\log n)^{-2a}}{k} \Big] \Big\} - C n^{-4} \\
  &=  (\log n)^{-1} (j+l)^{-2}  \, \Big\{ 1 - O \Big( (\log n)^{-\frac{3}{25}} \Big) \Big\} \, \Big\{ 1 - O \Big[ \frac{(\log n)^{-2a}}{k} \Big] \Big\},
  \end{align*}
where the minimum in the first inequality is taken over all $x \in \mathbb{Z}^{4}$ satisfying $0 \leftrightarrow x$ and $|x| \le \sqrt{n} (\log n)^{-\frac{a}{2}}$. Since the last quantity above is comparable to $(\log n)^{-1} (kn)^{-2}$,  this yields 
$$\Gamma(j,l) \ge (\log n)^{-1} (j+l)^{-2}  \, \Big\{ 1 - O \Big( (\log n)^{-\frac{3}{25}} \Big) \Big\} \, \Big\{ 1 - O \Big[ \frac{(\log n)^{-2a}}{k} \Big] \Big\}.$$

Thus, by taking the sum for all pairs $(j,l)$ with $j \wedge ( l-kn ) \wedge \{ (k+1)n -l \}  \ge n (\log n)^{-a}$ and $j \leftrightarrow l$, it follows that 
\begin{align*}
f (n;k) \ge  \frac{1}{2 \log n} \, \log \Big[ 1 + \frac{1}{k^{2} +2k } \Big] \,  \Big\{ 1 - O \Big( (\log n)^{-\frac{3}{25}} \Big) \Big\} \, \Big\{ 1 - O \Big[ \frac{(\log n)^{-2a}}{k} \Big] \Big\},
\end{align*}
as desired.
\end{proof}

\section{Proof of Theorem \ref{main}}\label{pfmain}

In this section, we prove Theorem \ref{main}. 
We give a preliminary lemma on the asymptotic behavior of a positive function in Section \ref{section:lemma}. 
We then observe that the functions that characterize $E[D_n]$ and $E[R_n]$ both exhibit such behavior in Section \ref{section:check}. 

\subsection{Preliminary lemma}\label{section:lemma}

\begin{lemma}\label{deriv}
Suppose that a function $\psi : \mathbb{N} \to (0, \infty)$ satisfies the following two conditions: 

\begin{itemize}
\item[(I)] there exists $c >0$ such that for all $n \ge 1$ and $k \in [n, 2n]$ 
$$\psi (k) = \psi (n) \big\{ 1 + O \big( (\log n)^{-c} \big) \big\}.$$

\item[(II)] there exists $\epsilon > 0$ such that for all $n \ge 1$ 
$$ \psi (2n) = \psi (n) \Big\{ 1 - \frac{\log 2 }{2 \log n} + O \big( (\log n)^{-1 - \epsilon} \big) \Big\}.$$ 

\end{itemize}
Set $\delta = \min \{ c, \epsilon \}$. Then there exists $a > 0$ such that 
$$\psi (n) = a  (\log n)^{-\frac{1}{2}} \Big\{ 1 + O \Big( \big( \log n \big)^{-\delta} \Big) \Big\},$$

\end{lemma}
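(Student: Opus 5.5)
The plan is to normalize by the expected profile and show that the normalized sequence converges along dyadic scales with a summable error. Set $\phi(n) := \psi(n) (\log n)^{1/2}$ for $n \ge 2$. For $n$ large, condition (II) gives
\[
\frac{\phi(2n)}{\phi(n)} = \Big(1 - \frac{\log 2}{2\log n} + O\big((\log n)^{-1-\epsilon}\big)\Big)\Big(1 + \frac{\log 2}{\log n}\Big)^{1/2}.
\]
Expanding the square root gives $\big(1+\frac{\log 2}{\log n}\big)^{1/2} = 1 + \frac{\log 2}{2\log n} + O((\log n)^{-2})$. The first-order terms cancel, so
\[
\frac{\phi(2n)}{\phi(n)} = 1 + O\big((\log n)^{-1-\epsilon}\big),
\]
where we may assume $\epsilon<1$, so the $(\log n)^{-2}$ term is absorbed.

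Next I fix a base point $n_0 \in [N,2N)$ with $N$ large enough that all factors above are positive and bounded away from $0$. I then iterate along $n_0 2^j$. Since $\log(n_0 2^j) \asymp j\log 2 + \log n_0$, the errors $O(j^{-1-\epsilon})$ are summable. Hence $\log\phi(n_0 2^j)$ converges to some limit $\log a(n_0)$, with tail
\[
\sum_{i\ge j} O(i^{-1-\epsilon}) = O(j^{-\epsilon}) = O\big((\log(n_0 2^j))^{-\epsilon}\big).
\]
This yields $\phi(n_0 2^j) = a(n_0)\{1 + O((\log (n_02^j))^{-\epsilon})\}$.

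The main obstacle is that the limit a priori depends on the residue class $n_0$, so one needs a single constant $a$ for all $n$. This is where condition (I) enters. Take $m \in [n_0 2^j, n_0 2^{j+1}]$ for two base points $n_0, n_0' \in [N,2N)$ with $n_0 \le n_0'$. The point $n_0' 2^j$ lies in $[n_0 2^j, 2 n_0 2^j]$, so (I) gives
\[
\psi(n_0' 2^j) = \psi(n_0 2^j)\{1+O((\log (n_0 2^j))^{-c})\}.
\]
The ratio of the logarithmic factors is $1+O(1/j)$. Letting $j\to\infty$ forces $a(n_0') = a(n_0)$. Call this common value $a>0$; it is positive because $\psi>0$ and the convergent product has factors bounded away from zero.

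Finally, for arbitrary large $n$, write $n = n_0 2^j$ with $n_0 \in [N,2N)$; since $n_0$ ranges over all integers in $[N,2N)$, every large $n$ has this form exactly. This gives
\[
\psi(n)(\log n)^{1/2} = a\{1+O((\log n)^{-\epsilon})\}.
\]
The constants in the $O$ must be uniform in $n_0$. They are, because the error bounds in (II) are uniform and $n_0$ ranges over a fixed finite set. In this route condition (I) is used only to identify the constants, so the final error is $O((\log n)^{-\epsilon})$, which is at most $O((\log n)^{-\delta})$ with $\delta=\min\{c,\epsilon\}$. Small $n$ are absorbed into the constant, since $\psi$ is positive there and the statement is asymptotic.
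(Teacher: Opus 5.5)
You have a genuine gap in your final step. It is false that every large $n$ can be written as $n_0 2^j$ with an integer $n_0\in[N,2N)$. If $n=n_0 2^j$, then the odd part of $n$ equals the odd part of $n_0$, which is at most $n_0<2N$. So no integer whose odd part is at least $2N$ has this form, and in particular no odd $n\ge 2N$ does. As written, your argument proves the asymptotics only on a sparse subset of $\mathbb{N}$. The claim that (I) is used only to identify the constants, giving error $O((\log n)^{-\epsilon})$, rests on this false covering statement. The ``fixed finite set'' justification of uniformity depends on it too.

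There are two ways to close the gap.

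\textbf{The paper's route.} The paper works along a single dyadic orbit. It sets $g(n)=\log\psi(2^n)$ and telescopes (II) to get $\psi(2^n)=e^b n^{-1/2}\{1+O(n^{-\epsilon})\}$. For arbitrary $k\in[2^n,2^{n+1}]$ it then applies (I) together with $(\log k)^{-1/2}=(\log 2^n)^{-1/2}\{1+O(1/n)\}$. This interpolation is what produces the exponent $\delta=\min\{c,\epsilon\}$. If you take this route, you must drop your claim of an $O((\log n)^{-\epsilon})$ error.

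\textbf{Repairing your multi-orbit idea.} Take as base points all odd integers $m$, so that each $n$ is uniquely $m2^j$. The finite-set argument for uniformity is then unavailable, but it is not needed. For every $n$ beyond a fixed threshold, the tail starting at $n$ satisfies
\[
\sum_{i\ge 0}C(\log(n2^i))^{-1-\epsilon}\le C'(\log n)^{-\epsilon},
\]
with $C'$ independent of $n$. Your comparison argument also extends to any two odd $m<m'$. Choose $i$ with $m2^i\le m'<m2^{i+1}$, apply (I) to $m'2^j$ versus $m2^{i+j}$, and let $j\to\infty$. This shows that $a(m)$ is the same for all odd $m$. This version in fact gives the sharper error $O((\log n)^{-\epsilon})$, with (I) used only qualitatively. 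That is a genuine, if minor, strengthening of the statement, but it requires these two additional arguments that your proposal does not contain.
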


\begin{proof}
Let $g(n) = \log \psi (2^{n})$. By the second condition (II),
$$g(n) = g(n-1) - \frac{1}{2n} + O \big( n^{-1-\epsilon} \big).$$
This gives  that for $m < n$
$$g(n) -g(m) = -\frac{1}{2} \log n + \frac{1}{2} \log m + O (m^{-\epsilon} ).$$
Thus, there exists $b \in \mathbb{R}$ such that 
$$\lim_{n \to \infty} \Big[ g(n) + \frac{1}{2} \log n \Big] = b \ \ \ \text{and} \ \ \ g(n) = -\frac{1}{2} \log n + b + O (n^{-\epsilon}).$$
This yields 
$$\psi (2^{n}) = e^{b} n^{-\frac{1}{2}} \big\{ 1 + O (n^{-\epsilon}) \big\} = \sqrt{\log 2} \, e^{b} (\log 2^{n})^{-\frac{1}{2}} \Big\{ 1 + O \Big( \big( \log 2^{n} \big)^{-\epsilon} \Big) \Big\}.$$
Let $a = \sqrt{\log 2} \, e^{b}$ and $\delta = \min \{ c, \epsilon \}$. For $k > 1$, taking $n$ satisfying $k \in [2^{n}, 2^{n+1}]$ and using first condition (I), we have 
$$\psi (k) = a  (\log k)^{-\frac{1}{2}} \Big\{ 1 + O \Big( \big( \log k \big)^{-\delta}\Big) \Big\},$$
as desired.
\end{proof}

\subsection{Checking the conditions (I) and (II)}\label{section:check}
We write $R_{n} =R_{{\cal G}_{0,n}} (0, S(n))$ and $D_{n} = d_{{\cal G}_{0,n}} (0, S(n))$. 
In this section, we will show that $\psi$ satisfies the conditions (I) and (II) appearing in Lemma \ref{deriv}. All the results obtained for $D_n$ in this section also hold for $R_n$. Therefore, in this section, we will consider only $D_n$.
Let 
\begin{equation}\label{psi}
\psi (n) = \frac{E ( D_{n} )}{n}.
\end{equation}

\begin{remark}\label{prev}
 In \cite{exact}, it was shown that for any $\epsilon > 0$ there exists $N_{\epsilon}$ such that for all $n \ge N_{\epsilon}$
\begin{equation}\label{prev1}
\frac{E ( R_{2n} )}{2n} \le \frac{E ( R_{n} )}{n} \Big\{ 1 - \frac{\log 2 }{2 \log n} ( 1 - \epsilon)  \Big\}.
\end{equation}
By using \eqref{prev1} and using the number of cut times as a lower bound of $E ( R_{n} )$, it was also shown in \cite{exact} that $E ( R_{n} ) = n (\log n)^{-1/2 + o(1)}$. Since the arguments in \cite{exact} can be safely applied to $D_n$ as well, it follows that for the function $\psi$ defined in \eqref{psi},
\begin{equation*}
 \psi (n) = (\log n)^{-1/2 + o(1)}.
\end{equation*}

To check the condition (II), all error terms must be smaller than $O \Big( \psi (n) (\log n)^{-1-\epsilon} \Big)$ for some $\epsilon>0$.  As we will see below, this constraint is quite sensitive, and this is the reason why we need Proposition \ref{fnk} derived in the previous section. 
\end{remark}

Let 
$$D^{1}_{n} = d_{{\cal G}_{0,\frac{n}{2}}} \Big( 0, S \Big( \frac{n}{2} \Big) \Big) \ \ \text{ and } \ \ D^{2}_{n} = d_{{\cal G}_{\frac{n}{2}}, n} \Big( S \Big( \frac{n}{2} \Big), S(n) \Big),$$
so that $D_{n}^{1}$ and $D_{n}^{2}$ are independent and identically distributed. 
Note that $D^{1}_{n} + D^{2}_{n} - D_{n} $ is always nonnegative by the triangle inequality and 
\begin{equation}\label{eq:Dn-s}
    E (D^{1}_{n} + D^{2}_{n} - D_{n} ) = n \Big\{  \psi \Big( \frac{n}{2} \Big) - \psi (n) \Big\}.
\end{equation}    
Thus, to achieve our goal of checking the assumptions (I) and (II), we need to estimate the expectation on the left-hand side of \eqref{eq:Dn-s}. 

To do it, we first introduce some notation. For $r \in \mathbb{R}$, we write 
$$a_{n,r} = n (\log n)^{r}.$$
Let  $b = \frac{13}{12}$ and $N= (\log n)^{b}$. Divide $[0,n]$ into $N$ intervals  as follows: set
\begin{itemize}
\item $t_{j} = \frac{n}{2}- ja_{n, -b}$ and $I_{j} = \big[ t_{j}, t_{j-1} \big]$ for $j = 1,2, \cdots, \frac{N}{2}$, (notice that $t_{j} < t_{j-1}$) 

\item $t_{k}' = \frac{n}{2} +  k a_{n,-b}$ and  $I_{k}' = \big[ t_{k-1}', t_{k}' \big]$ for $k = 1,2, \cdots, \frac{N}{2}$, (notice that $t_{k-1}' < t_{k}'$)

\item ${\cal G}_{j} = {\cal G}_{t_{j}, t_{j-1}}$  and  ${\cal G}_{k}' = {\cal G}_{t_{k-1}', t_{k}'}$ for $j, k = 1,2, \cdots, \frac{N}{2}$,

\item $D(j) = d_{{\cal G}_{j}} \big( S(t_{j}), S(t_{j-1}) \big)$ and $D(k)'= d_{{\cal G}'_{k}} \big( S(t'_{k-1}), S(t'_{k}) \big)$ for $j, k = 1,2, \cdots, \frac{N}{2}.$
\end{itemize}

For $j, k = 1,2, \cdots, \frac{N}{2}$, we write
$$I_{j} \leftrightarrow I_{k}' := \big\{ S [t_{j}, t_{j-1} ] \cap S [t_{k-1}', t_{k}' ] \neq \emptyset \big\}$$
for the event that ${\cal G}_{j}$ intersects ${\cal G}_{k}'$. Note that the event $I_{1} \leftrightarrow I_{1}'$ always occurs since $S (n/2)$ is a common vertex. Intuitively, if the event $I_{j} \leftrightarrow I_{k}' $ occurs for large $j,k$, then the difference $D^{1}_{n} + D^{2}_{n} - D_{n} $ is big. This is because, due to the intersection $I_{j} \leftrightarrow I_{k}' $, $S[0,n]$ has a shortcut, which makes $D_n$ smaller than $D_{n}^{1} + D_{n}^{2}$.  With this in mind, let
\begin{equation}\label{L}
L = \max \big\{ j+k \ | \ I_{j} \leftrightarrow I_{k}'  \big\}
\end{equation}
denote the size of the longest intersection. Clearly, $2 \le L \le N$. 

We first consider an upper bound of  $E (D^{1}_{n} + D^{2}_{n} - D_{n} )$. The following lemma concerning the variance of $D_n$ is needed.

\begin{lemma}\label{var}
We have 
\begin{equation}\label{lem-var}
    E(D_{n}^{2}) - \big\{ E (D_{n} )\big\}^{2} \le C \big\{ E (D_{n} )\big\}^{2} (\log n)^{-\frac{5}{12}}.
\end{equation}
\end{lemma}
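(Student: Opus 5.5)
The plan is to compare $D_n$ with the sum of distances across the blocks $I_j, I'_k$, using the decomposition introduced above. By the triangle inequality,
\[
D_n \le \sum_{j=1}^{N/2} D(j) + \sum_{k=1}^{N/2} D(k)'.
\]
A matching lower bound holds up to a correction that is controlled by cut times, since $D_n$ is at least the number of cut points. More precisely, I will use a decomposition over $N' = (\log n)^{b'}$ disjoint blocks of length $n/N'$. Let $Y_i$ be the graph distance across the $i$-th block. Then
\[
D_n \le \sum_i Y_i,
\]
and the $Y_i$ are i.i.d. with mean $(n/N')\,\psi(n/N')$. Hence
\[
\mathrm{Var}\Big(\sum_i Y_i\Big) = N' \,\mathrm{Var}(Y_1) \le N' E(Y_1^2) \le N' (n/N')^2 = n^2/N'.
\]
Since $\psi(n) = (\log n)^{-1/2+o(1)}$ by Remark \ref{prev}, this is $\{E(D_n)\}^2 (\log n)^{1+o(1)-b'}$. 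So the upper sum concentrates once $b'>1$.

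Next I need to show that $\sum_i Y_i - D_n$ is small in $L^2$, relative to $\{E(D_n)\}^2(\log n)^{-5/12}$. Write $\Delta = \sum_i Y_i - D_n \ge 0$. The key observation is that $\Delta$ only gets contributions from shortcuts. If a geodesic for $D_n$ passes from block $i$ to block $i'$ with $|i-i'|\ge 2$, this requires an intersection $S[\text{block } i]\cap S[\text{block } i']\neq\emptyset$. For adjacent blocks, the loss is at most what is created near the boundary by the intersection of neighbouring blocks.

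I plan to bound $\Delta$ by
\[
\Delta \le \sum_{i} \big( Y_i + Y_{i+1} \big)\, \mathbf{1}\{\text{block } i \text{ and block } i+1 \text{ have an intersection reaching far}\} + \sum_{|i-i'|\ge 2} (\text{lengths}) \,\mathbf{1}\{ \text{block } i \leftrightarrow \text{block } i'\}.
\]
I will then use Proposition \ref{fnk}, or the cruder bound $f(n;k)\le C/(k^2\log n)$, to get $P(\text{block } i \leftrightarrow \text{block } i+k) \le C/(k^2 \log n)$. Since $Y_i \le n/N'$, this yields
\[
E(\Delta^2) \le C (n/N')^2 \,E\Big(\#\{\text{intersecting pairs}\}^2\Big).
\]
The number of intersecting pairs is of order $N'/\log n$ in expectation, plus the adjacent-pair contributions. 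The adjacent pairs always intersect, so they must be handled differently. There I will use the portion of each block near its boundary: a sub-block of length $n (\log n)^{-b''}$ at each end, so that only intersections between the bulk of block $i$ and the bulk of block $i+1$ matter. These again have probability $O(1/\log n)$ times a factor.

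Finally, I combine the two pieces:
\[
\mathrm{Var}(D_n) \le 2\,\mathrm{Var}\Big(\sum_i Y_i\Big) + 2 E(\Delta^2).
\]
I then optimize $b'$ against the losses. The first term gives $(\log n)^{1-b'+o(1)}$. The second gives roughly $(n/N')^2 (N'/\log n)^2 \approx n^2(\log n)^{-2}$, compared with $\{E D_n\}^2 \approx n^2 (\log n)^{-1}$. The boundary corrections bring in further powers of $\log n$. The $o(1)$ in $\psi$ must be absorbed. I expect this to be the main obstacle, because Remark \ref{prev} only gives $\psi(n) = (\log n)^{-1/2+o(1)}$. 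I will instead use the cut-time bound \eqref{LCTN}–\eqref{LCTN1}, which gives $E(D_n)\ge c n(\log n)^{-1/2}$, together with the monotone comparison $\psi(n/N')\le C\psi(n)(\log n)^{\epsilon}$ obtained from \eqref{prev1}. With exponents of this type (for instance $b'=17/12$), the total is $\{E(D_n)\}^2(\log n)^{-5/12}$, giving \eqref{lem-var}.
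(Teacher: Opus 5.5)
\textbf{Gap in the bound for $E(\Delta^2)$.} Your variance bound for the block sum is fine. With $N'=(\log n)^{17/12}$ and $E(D_n)\ge cn(\log n)^{-1/2}$ it even avoids comparing $\psi$ across scales. The problem is the estimate for $E(\Delta^2)$. An intersection between block $i$ and block $i+k$ can lower $D_n$ by the distance across all the intermediate blocks, not by $O(n/N')$, so
\[
E(\Delta^2)\le C(n/N')^2E(\#\{\text{intersecting pairs}\}^2)
\]
does not follow from your pointwise bound. Suppose you weight each pair by its span and use only $Y_m\le n/N'$. Then the diagonal terms of the square of your bound alone give
\[
\sum_i\sum_{k\ge 2}\Big(\frac{kn}{N'}\Big)^2\,\frac{c}{k^2\log n}\asymp\frac{n^2}{\log n},
\]
using the lower bound in Proposition \ref{fnk}. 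This holds for every choice of $N'$, and it is far larger than the target $C\{E(D_n)\}^2(\log n)^{-5/12}\le n^2(\log n)^{-17/12+o(1)}$.

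The culprit is macroscopic shortcuts. For example, $S[0,n/4]\cap S[3n/4,n]\neq\emptyset$ has probability of order $1/\log n$. Bounding the resulting saving by the walk length $n/2$, instead of by the graph distance (about $\frac n2\psi(n)$), loses exactly the factor $\psi(n)^2\asymp(\log n)^{-1}$ that you need. Counting covered blocks instead of pairs fails for the same reason. The comparison $\psi(n/N')\le C\psi(n)(\log n)^{\epsilon}$ you mention does not rescue it either: it only controls $E(Y_m)$, and the intermediate block distances are correlated with the intersection event.

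\textbf{The missing ingredient: an upper-tail bound for block distances.} In the paper this is the event $F$ on which every $D(j)$ and $D'(k)$ is at most $(1+\varepsilon)a_{n,-b}\psi(n)$.
\begin{itemize}
\item Splitting $[0,n]$ further into $(\log n)^3$ pieces and applying Chebyshev to $\sum_{\star\star}$ together with \eqref{eqiv1} gives $P(D_n\ge(1+\varepsilon)E(D_n))\le c_\varepsilon(\log n)^{-5/2}$. A union bound then gives $P(F^c)\le c(\log n)^{-17/12}$.
\item On $F$, each unit of walk length costs only about $\psi(n)$. This is what makes even the crude bound $E((\sum_j Z_j)^2)\le N\,E(\sum_j Z_j)$ affordable.
\item On $F^c$ one pays $n^2P(F^c)\le C\{E(D_n)\}^2(\log n)^{-5/12}$. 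This is exactly where the exponent comes from.
\end{itemize}

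\textbf{The deterministic step.} The paper does not use your sketched inequality for $\Delta$; it uses cut times instead. Here $Z_j$ indicates that there is no cut time within $a_{n,-6}$ of an endpoint of block $j$, and $P(Z_j=1)\le C\log\log n/\log n$. Cut times give bridge edges, at which $D_n$ splits additively. If you keep your own route, your adjacent-block event (bulk of block $i$ against bulk of block $i+1$) must be enlarged. The end zone of block $i$ meeting the far part of block $i+1$ also creates a shortcut of size up to $Y_{i+1}$.
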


\begin{proof}
Let 
\begin{equation}\label{lem-def-sumstar}
    \sum_\star \coloneqq \sum_{j=1}^{N/2} D(j)+\sum_{k=1}^{N/2}D'(k).
\end{equation}
Then the left-hand side of \eqref{lem-var} is bounded above by 
\begin{align}
	E(\{D_n-E(D_n)\}^2)=E&\left(\left(D_n-\sum_\star+\sum_\star-E\Big(\sum_\star\Big)+E\Big(\sum_\star\Big)-E[D_n]\right)^2\right)	\notag	\\
		\le 3&\Biggl(\left(E\Big(\sum_\star\Big)-E(D_n)\right)^2+\mathrm{Var}\Big(\sum_\star\Big)+E\Big(\Big(D_n-\sum_\star\Big)^2\Big)\Biggr).	\label{lem-var-sum}
\end{align}
We bound from above the right-hand side step by step. We begin with the first term. 

We say that a time $0 \le t \le n$ is a cut time up to time $n$ if $S[0, t] \cap S[t+1, n] = \emptyset$. Furthermore, if $t$ is a cut time up to time $n$, we call $S(t)$ a cut point up to time $n$.
Let $Z_{j}$ be the indicator function of the event that  
\begin{align*}
&\Big\{ \nexists t \in \big[t_{j-1},t_{j-1}+a_{n, -6} \big] \text{ such that $t$ is a cut time up to $n$}  \Big\} \\
& \ \ \ \ \cup \ \Big\{ \nexists t \in \big[t_j-a_{n, -6},t_j\big] \text{ such that $t$ is a cut time up to $n$}  \Big\},
\end{align*}
and define $Z'_k$ by replacing $t_j$ with $t'_k$. 
By  \cite[Lemma 7.7.4]{Lawb}, we have 
\[P(Z_{j} =1) \le C (\log n)^{-1} \log \log n;\quad P(Z'_{k} =1) \le C (\log n)^{-1} \log \log n,\]
for each $j$ and $k$. By decomposing $S[0,n]$ at the cut points into non-intersecting subpaths, we see that 
\begin{align}
    0 \le \sum_\star - D_{n}&\le \sum_{i=1}^{N/2} D(j)Z_j+\sum_{k=1}^{N/2}D'(k)Z'_k+4a_{n,-b}   \label{sum-D}
        \\ &\le a_{n,-b}\left(\sum_{j=1}^{N/2}Z_{j}+\sum_{k=1}^{N/2}Z'_{k}\right) + 4 a_{n,-b}. \notag
\end{align}
In particular, since $b = 13/12$, this ensures that  
\begin{equation}\label{dif}
E \Big( \sum_\star \Big) = E (D_{n}) + O \big( n (\log n)^{-1} \log \log n \big).
\end{equation}
Moreover, from \eqref{dif} it follows that 
\begin{equation}\label{lem-var-1st}
	\left(E\Big(\sum_\star\Big)-E(D_n)\right)^2=O\left(\frac{n^2(\log\log n)^2}{(\log n)^2}\right)\le E(D_n)^2\cdot O\left(\frac{(\log \log n)^2}{\log n}\right),
\end{equation}
since $E(D_{n}) \ge c n (\log n)^{-\frac{1}{2}}$ (see \eqref{LCTN} and \eqref{LCTN1} for this). 

Next, we deal with the second term on the right-hand side of \eqref{lem-var-sum}. 
Let $r \in (0,4)$. As stated in  \cite[Remark 2.1.3]{exact}, if we modify the argument in \eqref{dif} above, we have
\begin{equation}\label{eqiv1}
\psi (a_{n, -r}) = \psi (n) \Big\{ 1 + O \Big( \frac{\log \log n}{\sqrt{\log n}} \Big) \Big\}
\end{equation}
where we used $\psi (n) \ge c (\log n )^{-1/2}$ again. (In \cite{exact}, \eqref{eqiv1} is proved for $E(R_n)/n$, but the same method can be applied to $\psi(n)$ as well.) 
By independence and translation invariance, we have 
\begin{align}
    \mathrm{Var}\Big(\sum_\star\Big)&=\sum_{j=1}^{N/2}\mathrm{Var}(D(j))+\sum_{k=1}^{N/2}\mathrm{Var}(D'(k))    \notag\\
    &\le (\log n)^b E(D(1)^2)    \notag\\
    &\le (\log n)^b a_{n,-b}E\left(D_{a_{n,-b}}\right) \notag   \\
    &\le cn^2(\log n)^{-b}\psi(n)\le cE(D_n)^2(\log n)^{-7/12},\label{var-sumstar}
\end{align}
where we used $D(1)\le a_{n,-b}$. We also used \eqref{eqiv1} in the second-to-last inequality. 

Finally, we deal with the third term. It follows from \eqref{sum-D} that 
\begin{equation}\label{lemDn-third-term}
	E\left(\Big(D_n-\sum_\star\Big)^2\right)\le 2E\Big(\sum_\star\!^{2}\Big)+32E(D_n)^2(\log n)^{-5}.
\end{equation}
Let $\varepsilon\in(0,1)$. We define the event $F$ by 
\[
    F=\bigcap_{j=1}^{N/2}\{D(j)\le (1+\varepsilon)a_{n,-b}\psi(n)\}\cap \bigcap_{k=1}^{N/2}\{D'(k)\le (1+\varepsilon)a_{n,-b}\psi(n)\},
\]
which implies 
\begin{align}
    E\Big(\sum_*\!^2\Big)&=E\Big(\sum_*\!^2 \mathrel{;}F\Big)+E\Big(\sum_*\!^2 \mathrel{;}F^c\Big)   \notag\\
        &\le (1+\varepsilon)^2a_{n,-b}^2\psi(n)^2E\left(\left\{2\sum_{j=1}^{N/2}Z_i\right\}^2\right)+n^2P(F^c)  \notag\\
        &\le (1+\varepsilon)^2a_{n,-b}^2\psi(n)^2NE\left(2\sum_{j=1}^{N/2}Z_i\right)+E(D_n)^2\log nP(F^c)  \notag \\
        &\le (1+\varepsilon)^2a_{n,-b}^2\psi(n)^2N^2P(Z_i=1)+E(D_n)^2\log nP(F^c)  \notag \\
        &\le E(D_n)^2\left((1+\varepsilon)^2\frac{\log\log n}{\log n}+\log nP(F^c)\right).  \label{lemDn-third-P(F^c)}
\end{align}

Now it remains to give an upper bound of $P(F^c)$. 
We define 
\begin{gather*}
    d(i)=d_{\mathcal{G}_{(i-1)a_{n,-3},ia_{n,-3}}}(S((i-1)a_{n,-3}),S(ia_{n,-3}))\mbox{ for }i=1,2,\cdots, (\log n)^3, \\
    \sum_{\star\star}=\sum_{i=1}^{(\log n)^3}d(i).
\end{gather*}
The same argument as \eqref{var-sumstar} yields 
\[
    \mathrm{Var}\Big(\sum_{\star\star}\Big)\le cE(D_n)^2(\log n)^{-5/2}.
\]
Moreover, we have 
\[
    E\Big(\sum_{\star\star}\Big)=(\log n)^3a_{n,-3}\psi(a_{n,-3})\le \left(1+\frac{\varepsilon}{2}\right)E(D_n),
\]
where the last inequality follows from \eqref{eqiv1}. 
Thus, by Markov's inequality, we have that 
\begin{align*}
    P(D_n\ge (1+\varepsilon)E(D_n))&\le P\Big(\sum_{\star\star}\ge (1+\varepsilon)E(D_n)\Big)    \\
        & \le P\Big(\Big|\sum_{\star\star}-E\Big(\sum_{\star\star}\Big)\Big|\ge \frac{\varepsilon}{2}E(D_n)\Big)    \\
        &\le 4\varepsilon^{-2}E(D_n)^{-2}\mathrm{Var}\Big(\sum_{\star\star}\Big)    \\
        &\le c_\varepsilon (\log n)^{-5/2},
\end{align*}
where $c_\varepsilon$ is a constant depending only on $\varepsilon$. 
By taking the union bound, we have that 
\[
    P(F^c)\le cN(\log n)^{-5/2}\le c(\log n)^{-17/12}. 
\]
Substituting this into \eqref{lemDn-third-P(F^c)} and combining with \eqref{lemDn-third-term}, we obtain 
\[
    E\left(\Big(D_n-\sum_\star\Big)^2\right)\le cE(D_n)^2(\log n)^{-5/12}.
\]
Finally, combining this with \eqref{lem-var-sum},\eqref{lem-var-1st} and \eqref{var-sumstar} yields \eqref{lem-var}.
\end{proof}

Using Lemma \ref{var}, we obtain the following lemma, which establishes a sufficiently fast decay of the upper tail of the graph distance between any two points relative to $E(D_{n})$.

\begin{lemma}\label{uppertail}
Let $\epsilon \in (0,\frac{1}{2})$. Then we have 
$$P \Big( \max \big\{ d_{{\cal G}_{t,t'}} \big( S(t), S(t') \big) \ \big| \ 0 \le t \le t' \le n \big\} \ge E (D_{n} ) \big\{ 1 + (\log n)^{-\epsilon} \big\} \Big) \le C (\log n)^{-\frac{13}{4} + 2 \epsilon}.$$
In particular, 
$$P \Big( D_{n} \ge E (D_{n} ) \big\{ 1 + (\log n)^{-\epsilon} \big\} \Big) \le C (\log n)^{-\frac{13}{4} + 2 \epsilon}.$$
\end{lemma}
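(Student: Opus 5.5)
The plan is to dominate the maximum over all sub-walks by a single sum of independent block distances, and then apply Chebyshev's inequality using the variance bound of Lemma \ref{var} at a smaller scale. Fix $r = \frac{17}{6} \in (0,4)$. I will split $[0,n]$ into $K = (\log n)^{r}$ consecutive blocks of length $a := a_{n,-r}$, and set
$$d(i) = d_{{\cal G}_{(i-1)a, ia}}\big(S((i-1)a), S(ia)\big), \qquad \Sigma = \sum_{i=1}^{K} d(i).$$
By the Markov property and translation invariance, the $d(i)$ are i.i.d. with the law of $D_{a}$.

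\emph{Deterministic domination.} Take $0 \le t \le t' \le n$. I will build a path from $S(t)$ to $S(t')$ inside ${\cal G}_{t,t'}$ in three pieces:
\begin{itemize}
\item follow the walk itself from $S(t)$ to the first block endpoint after $t$, which uses at most $a$ steps;
\item concatenate geodesics in each complete block contained in $[t,t']$, each of which lies in ${\cal G}_{t,t'}$;
\item follow the walk from the last block endpoint before $t'$ to $S(t')$, again at most $a$ steps.
\end{itemize}
If $t$ and $t'$ lie in the same block, the walk segment itself gives length at most $a$. Hence, by the triangle inequality,
$$\max_{0 \le t \le t' \le n} d_{{\cal G}_{t,t'}}\big(S(t),S(t')\big) \le \Sigma + 2a.$$
Here $2a = 2n(\log n)^{-17/6}$, which is $o\big(E(D_n)(\log n)^{-\epsilon}\big)$ because $E(D_n) \ge c n (\log n)^{-1/2}$ by \eqref{LCTN} and \eqref{LCTN1}, and $\epsilon < \frac12$.

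\emph{Mean and variance of $\Sigma$.} By \eqref{eqiv1},
$$E(\Sigma) = K a\, \psi(a) = E(D_n)\Big\{1 + O\Big(\tfrac{\log\log n}{\sqrt{\log n}}\Big)\Big\},$$
and the error is $o\big((\log n)^{-\epsilon}\big)$ since $\epsilon < \frac12$. For the variance, independence and Lemma \ref{var} applied at scale $a$ (note $\log a \asymp \log n$) give
$$\mathrm{Var}(\Sigma) = K\,\mathrm{Var}(D_a) \le C K\, E(D_a)^2 (\log n)^{-5/12}.$$
Using \eqref{eqiv1} again, $E(D_a) \le C\, a\,\psi(n) = C K^{-1} E(D_n)$, so
$$\mathrm{Var}(\Sigma) \le C\, E(D_n)^2 (\log n)^{-r - 5/12} = C\, E(D_n)^2 (\log n)^{-13/4}.$$

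\emph{Conclusion.} For large $n$, the event that the maximum is at least $E(D_n)\{1+(\log n)^{-\epsilon}\}$ forces
$$\Sigma - E(\Sigma) \ge \tfrac12 (\log n)^{-\epsilon} E(D_n).$$
Chebyshev's inequality then bounds its probability by $4(\log n)^{2\epsilon}\, E(D_n)^{-2}\,\mathrm{Var}(\Sigma) \le C(\log n)^{-13/4 + 2\epsilon}$. Small $n$ are absorbed into the constant $C$. The second claim follows by taking $t=0$, $t'=n$.

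The main obstacle is the choice of block scale. The block length must be small enough that $K$ independent blocks shrink the relative variance to $(\log n)^{-13/4}$, yet $r$ must stay below $4$ so that \eqref{eqiv1} keeps the mean error at $O(\log\log n/\sqrt{\log n})$. One must also check that Lemma \ref{var}, whose statement is in terms of $n$, transfers to scale $a$ with only a constant change; this holds because $\log a = \log n - r\log\log n \asymp \log n$.
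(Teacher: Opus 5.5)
Your proposal is correct and is essentially the paper's argument for the first assertion: dominate the maximum by $2a+\Sigma$ over i.i.d.\ blocks of length $a_{n,-r}$, control $E(\Sigma)$ via \eqref{eqiv1}, and apply Chebyshev with Lemma \ref{var} at the block scale. The paper takes $r=3$, which even gives $(\log n)^{-41/12+2\epsilon}$, while your $r=\frac{17}{6}$ lands exactly on $-\frac{13}{4}$; deducing the second assertion from the first, as you do, is also cleaner than the paper's separate chain through $\sum_\star$ at scale $a_{n,-b}$, where the same computation with Lemma \ref{var} would only yield $(\log n)^{-3/2+2\epsilon}$.
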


\begin{proof}
Recall the definition of $\displaystyle \sum_\star$ (see \eqref{lem-def-sumstar} for this). The triangle inequality ensures $\displaystyle \sum_\star \ge D_n $.

Therefore, if $\epsilon \in (0,\frac{1}{2})$, we have 
\begin{align*}
&P \Big( D_{n} \ge E (D_{n} ) \big\{ 1 + (\log n)^{-\epsilon} \big\} \Big) \le P \Big( \sum_{\star} \ge E (D_{n} ) \big\{ 1 + (\log n)^{-\epsilon} \big\} \Big) \\
&\le P \Big( \sum_{\star} \ge E \big( \sum_{\star} \big) \big\{ 1 + c (\log n)^{-\epsilon} \big\} \Big) \le C \text{Var} \big(  \sum_{\star} \big)  E \big( \sum_{\star} \big)^{-2} (\log n)^{ 2 \epsilon}  \\
&\le C (\log n)^{-\frac{13}{4} + 2 \epsilon},
\end{align*}
where 
we used 

\begin{itemize}
\item  \eqref{dif} and the fact that $\epsilon \in (0,\frac{1}{2})$ and $E(D_{n}) \ge c n (\log n)^{-\frac{1}{2}} $ (see \eqref{LCTN} and \eqref{LCTN1}) to replace $E (D_{n} )$ with $E \big(\displaystyle \sum_{\star} \big)$ in the second inequality,

\item Lemma \ref{var} in the last inequality.
    
\end{itemize}
This gives the second assertion. 

As for the first assertion, note that if $0 \le t \le t' \le n$,
$$ d_{{\cal G}_{t,t'}} \big( S(t), S(t') \big) \le 2 a_{n,-3} + \sum_{i=1}^{M} d_{{\cal G}_{(i-1)a_{n, -3}, i a_{n, -3}}} \Big( S \big( (i-1) a_{n,-3} \big), S \big( i a_{n,-3} \big) \Big).$$
Since $E(D_{n}) \ge c n (\log n)^{-\frac{1}{2}}$ and $\epsilon \in (0,1/2)$, by repeating the same argument as above, we get the first assertion.  
\end{proof}

\begin{remark}\label{slow}
Let $r \in (0,4)$. By exactly the same argument as that used to prove \eqref{eqiv1}, it follows that for $k \in [n/2, n]$
\begin{equation}\label{eqiv2}
\psi (k) =\psi (n) \Big\{ 1 + O \Big( \frac{\log \log n}{\sqrt{\log n}} \Big) \Big\}.
\end{equation}
Thus, our $\psi$ defined in \eqref{psi} satisfies the condition (I) in  Lemma \ref{deriv}.
\end{remark}

Let us now 
verify the condition (II) in  Lemma \ref{deriv}. The next proposition gives an upper bound of this expectation $E (D_{n}^{1} + D_{n}^{2}-D_{n})$, \textit{i.e.}\ the left-hand side of \eqref{eq:Dn-s}.

\begin{proposition}\label{upper}
We have 
$$\psi(n/2) - \psi (n) \le  \frac{\log 2}{2 \log n}  \psi (n/2)  \big\{ 1 + O \big( (\log n)^{-1/100} \big) \big\}.$$
\end{proposition}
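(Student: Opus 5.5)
By \eqref{eq:Dn-s}, the statement is equivalent to
\[
E(D^1_n+D^2_n-D_n)\le \frac{n\log 2}{2\log n}\,\psi(n/2)\bigl\{1+O((\log n)^{-1/100})\bigr\}.
\]
I will prove this by splitting according to the value of the longest-intersection index $L$ from \eqref{L}. The deterministic input is the following. On $\{L=m\}$, let $I_j\leftrightarrow I'_k$ with $j+k=m$. Then any geodesic for $D_n$ can be compared with the concatenation of the $D(i)$, $D'(i)$ over the blocks with $i>j$ (respectively $i>k$), together with a path through ${\cal G}_j\cup{\cal G}'_k$ that uses the intersection. Conversely, nothing below index $j+k+1$ in total can be skipped. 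This should give
\[
D^1_n+D^2_n-D_n\le \sum_{i\le j+1}D(i)+\sum_{i\le k+1}D'(i)
\]
up to boundary terms. To control the boundary terms with small probability of failure I will use cut times near the block endpoints, via the $Z_j,Z'_k$ of Lemma \ref{var}. The subsum on the right is a graph distance along a piece of the path of length about $(m+2)a_{n,-b}$. By Lemma \ref{uppertail}, applied at scale $(m+2)a_{n,-b}$ with its maximal version over all sub-segments, this is at most $(m+2)a_{n,-b}\,\psi(n/2)\{1+O((\log n)^{-\epsilon})\}$ off an event of probability $O((\log n)^{-13/4+2\epsilon})$. Here I use Remark \ref{slow} and \eqref{eqiv1} to replace $\psi$ at the smaller scale by $\psi(n/2)$; since $m\ge 2$ this scale is at least $n(\log n)^{-13/12}$, so \eqref{eqiv1} applies. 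On the bad event I use the trivial bound $D^1_n+D^2_n-D_n\le n$, which contributes $n(\log n)^{-13/4+2\epsilon}$. This is negligible against $n(\log n)^{-3/2-1/100}$ for any $\epsilon<3/4$.

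This reduces the problem to bounding $a_{n,-b}\psi(n/2)\sum_m (m+2)P(L=m)$, i.e.\ essentially $a_{n,-b}\psi(n/2)E(L)$. I write $E(L)=\sum_{m}P(L\ge m)$ and bound $P(L\ge m)$ by a union bound over the pairs $(j,k)$ with $j+k=m$. The gap between $I_j$ and $I'_k$ has length about $(j+k-2)a_{n,-b}$, and both intervals have length $a_{n,-b}$. By time reversal of the first piece, the probability of $I_j\leftrightarrow I'_k$ is exactly an $f(\cdot\,;\cdot)$ quantity from Proposition \ref{fnk}, with $n$ replaced by $a_{n,-b}$ and an effective gap $\ell=j+k-2$. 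Proposition \ref{fnk} gives
\[
\frac{1}{2\log n}\log\Bigl(1+\frac{1}{\ell^2+2\ell}\Bigr)\Bigl(1+O\bigl(\ell^{-1}(\log n)^{-3/25}\bigr)\Bigr),
\]
where I use $\log a_{n,-b}=\log n\,(1+O(\log\log n/\log n))$.

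The union bound over $j$ is too lossy. A sharper route is to bound $E(D^1_n+D^2_n-D_n)$ directly by the sum $\sum_{j,k}a_{n,-b}\psi\, P(\text{the shortcut block }(j,k)\text{ is maximal-ish})$. Equivalently, one uses the discrete identity that $\sum_{j,k}P(I_j\leftrightarrow I'_k)$, weighted by increments, telescopes. Concretely, the contribution is $\sum_{m}P(L\ge m)\,a_{n,-b}$, and I would bound $P(L\ge m)\le \sum_{j+k\ge m,\ \text{extremal}}$. The cleanest route is to use the monotonicity of the event in $(j,k)$: $\{L\ge m\}$ is the union over $j+k=m$. Since the terms $\log(1+1/(\ell^2+2\ell))$ sum telescopically, $\sum_{\ell\ge\ell_0}\log\frac{(\ell+1)^2}{\ell(\ell+2)}=\log\frac{\ell_0+1}{\ell_0}$. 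Summing in the appropriate order should therefore give $\sum_m P(L\ge m)\approx\frac{1}{2\log n}\sum_{m\le N}\log\frac{m}{m-1}\approx\frac{\log N}{2\log n}\cdot$ \ldots. Multiplied by $a_{n,-b}$ this is too small, and the correct normalisation must instead come out as $\frac{n\log 2}{2\log n}$. This exposes that the real count is over scales: summing $\frac{1}{\ell}$-type densities in $(j,k)$ over the triangle $j,k\le N/2$ yields the harmonic-type constant $\log 2$. I expect the main obstacle to be exactly this step: organising the sum over $(j,k)$ so that the first-order term is $\frac{\log 2}{2\log n}n$, while all errors stay $O((\log n)^{-1/100})$ relative. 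The small pairs $j+k\le (\log n)^{\eta}$ contribute only $O(a_{n,-b}(\log n)^{\eta})$, which is negligible since $b>1$. The $k^{-1}$ factor in the Proposition \ref{fnk} error makes the remaining error summable.
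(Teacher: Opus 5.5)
Your architecture is the paper's: split by $L$, bound the saving on $\{L=m\}$ by about $m+2$ block distances, and evaluate with Proposition \ref{fnk}. But the step that makes it work is missing.

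\textbf{Cut times are not controlled.} Let $x,y$ be the first and last points of a $D_n$-geodesic lying in $S[0,n/2]\cap S[n/2,n]$. Then deterministically $D^1_n+D^2_n-D_n\le d_{{\cal G}_{0,n/2}}(x,S(n/2))+d_{{\cal G}_{n/2,n}}(S(n/2),y)$. On $\{L=m\}$ this is only about $2m$ blocks, and $2m$ is attained: if both $I_{m-1}\leftrightarrow I'_1$ and $I_1\leftrightarrow I'_{m-1}$ occur, a geodesic uses both shortcuts. This would double the constant. Your bound $\sum_{i\le j+1}D(i)+\sum_{i\le k+1}D'(i)$ requires cut times up to $n$ in $I_{j+1}$ and $I'_{k+1}$. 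You must show that their failure is rare \emph{relative to} $P(I_j\leftrightarrow I'_k)\asymp(\log n)^{-1}(j+k)^{-2}$, which can be as small as $(\log n)^{-19/6}$.

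The $Z_j,Z'_k$ of Lemma \ref{var} only give the unconditional bound $C(\log n)^{-1}\log\log n$, which is useless here. The events are also correlated. A local cut time near $t_{j+1}$ exists independently of $\{I_j\leftrightarrow I'_k\}$, but it fails to be a cut time up to $n$ only if $S[0,t^\ast]\cap S[t_j,n]\neq\emptyset$, where $t^\ast=(t_{j+1}+t_j)/2$. That event shares $S[t_j,n]$ with $\{I_j\leftrightarrow I'_k\}$. The paper proves $P(I_j\leftrightarrow I'_k,\ \text{no such cut time})\le C(\log n)^{-1}(\log\log n)^{C}P(I_j\leftrightarrow I'_k)$ by time reversal and the freezing lemma \eqref{freez}, \eqref{freez2}; see \eqref{FAP}--\eqref{deco}. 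Your error accounting charges only the failure event of Lemma \ref{uppertail}, so this term is never estimated.

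The case $L=2$ needs separate care, since it has probability close to one. A crude bound on cut-time failure near $n/2$ costs $n\psi(n)(\log n)^{-1}\log\log n$, which exceeds the main term. The paper uses a separate argument there (the events $H_1,H_2$ and the scale $j_n=(\log n)^{1/100}$), which is the source of the exponent $1/100$. The deterministic bound above would also handle $L=2$, since then $x,y\in S(I_1)\cap S(I'_1)$.

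\textbf{The leading term is never obtained.} The route you discard is the one that works. Your telescoping gives $\frac{\log N}{2\log n}$ because it keeps only one pair per value of $l=j+k$. The union bound over $j$ is exactly what produces the right constant:
\begin{itemize}
\item For $l\ge 4$, all pairs with $j+k=l$ have the same probability $p_l=f(a_{n,-b};l-2)=\frac{1}{2\log n}\,l^{-2}\{1+O(l^{-1})+O(\frac{\log\log n}{\log n})\}$.
\item There are $l-1$ such pairs for $l\le N/2$ and $N-l+1$ for $N/2<l\le N$.
\item $\sum_{l\le N/2}\frac{l-1}{l}+\sum_{N/2<l\le N}\frac{N-l+1}{l}=N\log 2+O(\log N)$.
\end{itemize}
Since $Na_{n,-b}=n$, this gives $\sum_{j,k}(j+k+2)\,a_{n,-b}\psi(n)P(I_j\leftrightarrow I'_k)=\frac{\log 2}{2\log n}\,n\psi(n)\{1+O(\frac{\log\log n}{\log n})\}$. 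This is the paper's computation (i)--(iv).

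\textbf{Two smaller points.}
\begin{itemize}
\item Small pairs contribute $O(a_{n,-b}\psi(n)(\log n)^{\eta})$ with $\eta<1/12$, not $O(a_{n,-b}(\log n)^{\eta})$. The latter is not negligible against the admissible error $n(\log n)^{-3/2-\epsilon}$.
\item Apply Lemma \ref{uppertail} block by block, as in $V_1$, rather than at the random scale $(m+2)a_{n,-b}$. Otherwise the union bound over the $O(N^2)$ possible windows leaves a bad event of probability about $(\log n)^{-13/12+2\epsilon}$. Multiplied by your trivial bound $n$, that is not negligible against $n(\log n)^{-3/2-\epsilon}$.
\end{itemize}
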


\begin{proof}
Since \cite{exact} considers only a lower bound for $E (D_{n}^{1} + D_{n}^{2}-D_{n})$, we provide the details of the proof here.

Recall that $b= \frac{13}{12}$.  Let $V_{1}$ denote the event that the following three conditions are fulfilled:
\begin{itemize}
\item $D_{n}^{1} \vee D_{n}^{2} \vee D_{n} \le 2n \psi (n)$,

\item $D(j) \le n (\log n)^{-b} \psi (n)   \big\{ 1 + O \big( (\log n)^{-1/12} \big) \big\}$ for all $j =1,2, \cdots , N/2$,\hspace{\fill}\inlinelabel{condition-2}

\item $D(k)' \le n (\log n)^{-b} \psi (n)  \big\{ 1 + O \big( (\log n)^{-1/12} \big) \big\}$ for all $k = 1,2, \cdots , N/2$.\hspace{\fill}\inlinelabel{condition-3}

\end{itemize}
We recall that the notation used here is defined immediately after Remark \ref{prev}.
Lemma \ref{uppertail} and \eqref{eqiv2} guarantee that 
$$P (V_{1}) = 1 - O \big( (\log n)^{-2} \big).$$
Therefore, 
\begin{equation}\label{1st}
E (D_{n}^{1} + D_{n}^{2}-D_{n}) = E (D_{n}^{1} + D_{n}^{2}-D_{n} \ ; \ V_{1}) + O \big( n (\log n)^{-2} \big).
\end{equation}
Since $\psi (n) \ge c (\log n)^{-1/2}$ (see \eqref{LCTN} and \eqref{LCTN1} for this), we note that in order to check the condition (II) in Lemma \ref{deriv}, every error term must be $O \big( n (\log n)^{-\frac{3}{2} - \epsilon} \big)$ for some $\epsilon > 0$. 

Next, we consider $L$, defined in \eqref{L}, the size of the longest intersection. Distinguishing cases according to the possible values of $L$, 
\begin{align}
&E (D_{n}^{1} + D_{n}^{2}-D_{n} \ ; \ V_{1}) = E (D_{n}^{1} + D_{n}^{2}-D_{n} \ ; \ V_{1}, \, L=2) \notag \\
&+ \sum_{l=3}^{N/2} E (D_{n}^{1} + D_{n}^{2}-D_{n} \ ; \ V_{1}, \, L=l) + \sum_{l=N/2 +1}^{N} E (D_{n}^{1} + D_{n}^{2}-D_{n} \ ; \ V_{1}, \, L=l). \label{2nd}
\end{align}

We first deal with the second term. Let $3 \le l \le N/2$. If $L=l$, there exists $j \in \{ 1,2, \cdots , l-1 \}$ such that $I_{j} \leftrightarrow I_{l-j}'$ occurs. Therefore, 
$$E (D_{n}^{1} + D_{n}^{2}-D_{n} \ ; \ V_{1}, \, L=l) \le \sum_{j=1}^{l-1} E \big( D_{n}^{1} + D_{n}^{2}-D_{n} \ ; \ V_{1}, \, I_{j} \leftrightarrow I_{l-j}' \big).$$
With this in mind, let 
$$V_{2} = V_{2; l, j} := V_{1} \cap \{  I_{j} \leftrightarrow I_{l-j}' \}.$$

We aim to show that given $\{  I_{j} \leftrightarrow I_{l-j}' \}$, with high (conditional) probability, there exist $T \in I_{j+1}$ and $T' \in I_{l-j+1}'$ such that  $S[0,T] \cap S[T+1, n] = S[0,T'] \cap S[T'+1, n]= \emptyset$ (recall that such $T$ and $T'$ were called cut times up to time $n$). Let 
$$V' = \big\{ \exists t \in [t_{j+1}, t_{j+1} + a_{n,-6}] \text{ such that } S[0, t] \cap S[t+1, t_{j}] = \emptyset \big\}.$$
Note that $V'$ and $\{  I_{j} \leftrightarrow I_{l-j}' \}$ are independent.  \cite[Lemma 7.7.4]{Lawb} implies that 
$$P (V') = 1 -O \Big( \frac{\log \log n}{\log n} \Big).$$
Suppose that both $V'$ and $\{  I_{j} \leftrightarrow I_{l-j}' \}$ occur. Take $T  \in [t_{j+1}, t_{j+1} + a_{n,-6}] \text{ such that } S[0, T] \cap S[T+1, t_{j}] = \emptyset$.

To achieve the aforementioned aim, we will prove 
that in fact $T$ becomes a cut time up to time $n$ with high probability. If $T$ is not a cut time up to  $n$, the following intersection must occur: 
$$S[0, t^{\ast}] \cap S[t_{j}, n] \neq \emptyset,$$ 
where $t^{\ast} := (t_{j+1}+t_{j})/2$. Note that this 
event is not independent of the event $I_{j} \leftrightarrow I_{l-j}'$. 
However, we can see that these two events are 
almost independent in the following sense, resulting in \eqref{deco} below. By translating $S[0,n]$ so that $S(t_{j})$ is at the origin, and then time-reversing the path $S[0, t_{j}]$, we note that 
\begin{align}
&P \Big( S[0, t^{\ast}] \cap S[t_{j}, n] \neq \emptyset, \, I_{j} \leftrightarrow I_{l-j}'  \Big) \notag \\
& \le P \Big( S^{1}[a_{n,-b}/2, (N/2-j)a_{n,-b}] \cap S^{2}[0, n] \neq  \emptyset, \notag \\
& \ \ \  \ \ \  \  \ \ \ S^{2}[0, a_{n,-b}] \cap S^{2} [(l-1)a_{n,-b}, l a_{n,-b}] \neq \emptyset \Big), \label{FAP}
\end{align}
where $S^{1}$ and $S^{2}$ are independent random walks started at the origin (we write $P_{i}^{x}$ for the probability law of $S^{i}$ assuming $S^{i}(0) =x$). The time-reversal of $S[0, t_{j}]$ corresponds to $S^{1}$, while $S[t_{j}, n]$ corresponds to $S^{2}$. To bound the right-hand side of \eqref{FAP} from above, take $q \ge1$ and let 
\begin{equation}\label{YQ}
Y_{q} := \max \Big\{ P_{1}^{x} \big( S^{1} [0, n] \cap S^{2} [0, n] \neq \emptyset \big) \ \Big| \ x \in \mathbb{Z}^{4}, \, \text{dist} \big( x, S^{2} [0, n] \big) \ge \sqrt{n} (\log n)^{-q} \Big\}.
\end{equation}
Here, $\mathrm{dist}(A,B)$ denotes the distance between sets $A$ and $B$ with respect to the Euclidean metric on $\mathbb{R}^4$. Note that $Y_{q}$ is a random variable measurable with respect to $S^{2} [0, n]$. Combining \cite[Propositions 4.1]{slowly} and \cite[Propositions 4.3]{slowly} ensures that for any $p, q \ge 1$, there exists $C >0$ such that 
\begin{equation}\label{freez}
P_{2}^{0} \Big( Y_{q} \le C \frac{(\log \log n)^{C}}{\log n} \Big) \ge 1 - C (\log n)^{-p}.
\end{equation}
Here we note that, as stated immediately before \cite[Proposition 4.1]{slowly}, the powers of the logarithm (that is, $p$ and $q$) can be chosen arbitrarily large, by taking the constant $C$ sufficiently large depending on $p$ and $q$.
This type of estimate is sometimes called a `freezing lemma'.
 By \cite[Theorem 1.2.1]{Lawb} and  \cite[Proposition 1.5.10]{Lawb}, it is easy to see that 
\begin{equation}\label{freez2}
P \Big( \text{dist} \big( S^{1} (a_{n,-b}/2), S^{2} [0, n] \big) \le \sqrt{n} (\log n)^{-20} \Big) \le C (\log n)^{-10}.
\end{equation}
With this in mind, we use \eqref{freez} for $p=10$ and $q=20$. Proposition \ref{fnk} implies that 
\begin{equation}\label{FR3}
P( I_{j} \leftrightarrow I_{l-j}' ) \asymp (\log n)^{-1} l^{-2} \ge (\log n)^{-1} N^{-2} = (\log n)^{-\frac{19}{6}},
\end{equation}
which is much bigger than the error terms that appear in \eqref{freez} and \eqref{freez2}. We now apply the freezing lemma \eqref{freez} to estimate the probability appearing on the right-hand side of \eqref{FAP}. For this purpose, we proceed in the following steps. (In what follows, we will apply the freezing lemma \eqref{freez} several times, following the same procedure.)

\begin{itemize}
\item Let $p=10$ and $q=20$. Choosing $C>0$ so that \eqref{freez} holds for this choice of $(p,q)$, we set 
$$A_{\ast} = \bigg\{ Y_{q} \le C \frac{(\log \log n)^{C}}{\log n} \bigg\}. $$

\item Also, we write
$$A_{\ast \ast} = \Big\{ \text{dist} \big( S^{1} (a_{n,-b}/2), S^{2} [0, n] \big) \ge \sqrt{n} (\log n)^{-20} \Big\}. $$
By \eqref{freez} and \eqref{freez2}, 
\begin{align}
&P \Big( S^{1}[a_{n,-b}/2, (N/2-j)a_{n,-b}] \cap S^{2}[0, n] \neq  \emptyset, \notag \\
& \ \ \ \ \ \ \ S^{2}[0, a_{n,-b}] \cap S^{2} [(l-1)a_{n,-b}, l a_{n,-b}] \neq \emptyset \Big) \notag \\
&\le P \Big( S^{1}[a_{n,-b}/2, (N/2-j)a_{n,-b}] \cap S^{2}[0, n] \neq  \emptyset, \notag \\
& \ \ \ \ \ \ \ S^{2}[0, a_{n,-b}] \cap S^{2} [(l-1)a_{n,-b}, l a_{n,-b}] \neq \emptyset, \, A_{\ast} \cap A_{\ast \ast} \Big) +  O \big( (\log n)^{-10} \big).\label{ineq:non-intersec} 
\end{align}

\item In order to estimate the probability appearing on the right-hand side of \eqref{ineq:non-intersec}, we use the Markov property for $S^{1}$ at time $a_{n,-b}/2$. Since on the event $A_{\ast} \cap A_{\ast \ast}$, the probability that a random walk started at $S^{1} (a_{n,-b}/2)$ hits $S^{2} [0, n]$ by time $n$ is less than $C (\log n)^{-1} (\log \log n)^{C}$, 
the right-hand side of \eqref{ineq:non-intersec} is bounded above by the following quantity.
\begin{align}
&C (\log n)^{-1} (\log \log n)^{C} P \Big( S^{2}[0, a_{n,-b}] \cap S^{2} [(l-1)a_{n,-b}, l a_{n,-b}] \neq \emptyset \Big) \notag \\
&\ \ \ \ \  + O \big( (\log n)^{-10} \big) \notag \\
&\le C (\log n)^{-1} (\log \log n)^{C} P( I_{j} \leftrightarrow I_{l-j}' ) + O \big( (\log n)^{-10} \big) \notag \\
&\le C (\log n)^{-1} (\log \log n)^{C} P( I_{j} \leftrightarrow I_{l-j}' ), \label{deco}
\end{align}
where we used \eqref{FR3} in the last inequality.

\end{itemize}

Therefore, if  
\begin{align*}
    V''&=\left\{\begin{gathered} \exists T \in [t_{j+1}, t_{j+1} + a_{n,-6}] \text{ and } \exists T' \in [t_{l-j+1}'-a_{n,-6}, t_{l-j+1}']  \text{ such that } \\
S[0, T] \cap S[T+1, n] =S[0, T'] \cap S[T'+1, n] = \emptyset 
    \end{gathered}\right\},   \\
V_{3} &= V_{2} \, \cap \, V'',
\end{align*}
we have 
\begin{align}
&E \big( D_{n}^{1} + D_{n}^{2}-D_{n} \ ; \ V_{2} \big) = E \big( D_{n}^{1} + D_{n}^{2}-D_{n} \ ; \ V_{3} \big) + E \big( D_{n}^{1} + D_{n}^{2}-D_{n} \ ; \ V_{2} \cap (V'')^{c} \big) \notag \\
&\le E \big( D_{n}^{1} + D_{n}^{2}-D_{n} \ ; \ V_{3} \big) + 4 n \psi (n) P \big( V_{2} \cap (V'')^{c} \big) \notag \\
&\le E \big( D_{n}^{1} + D_{n}^{2}-D_{n} \ ; \ V_{3} \big) + C n \psi (n) (\log n)^{-1}(\log \log n)^{C} P \big(  I_{j} \leftrightarrow I_{l-j}' \big).\notag 
\end{align}

Suppose that $V_{3}$ occurs. Take two cut times $T$ and $T'$ up to $n$ claimed in $V''$. Note that 
\begin{align*}
D_{n}^{1} + D_{n}^{2} - D_{n} &\le d_{{\cal G}_{T, n/2}} \big( S(T), S(n/2) \big) + d_{{\cal G}_{n/2, T'}} \big( S(n/2), S(T') \big) \\
&\le d_{{\cal G}_{t_{j+1}, n/2}} \big( S(t_{j+1}), S(n/2) \big) + d_{{\cal G}_{n/2, t_{l-j+1}'}} \big( S(n/2), S(t_{l-j+1}') \big).
\end{align*}
The 
conditions \eqref{condition-2} and \eqref{condition-3} of $V_{1}$ ensure that the right-hand side above is smaller than
$$(l+2) n (\log n)^{-b} \psi (n) \big\{ 1 + O \big( (\log n)^{-1/12} \big) \big\}.$$
Thus, we have 
\begin{align*}
&E \big( D_{n}^{1} + D_{n}^{2}-D_{n} \ ; \ V_{2} \big) \\
&\le E \big( D_{n}^{1} + D_{n}^{2}-D_{n} \ ; \ V_{3} \big) + C n \psi (n) (\log n)^{-1}(\log \log n)^{C} P \big(  I_{j} \leftrightarrow I_{l-j}' \big) \\
&\le (l+2) n (\log n)^{-b} \psi (n) \big\{ 1 + O \big( (\log n)^{-1/12} \big) \big\} P (V_{3}) \\
& \ \ \ \ \ \ \ \ + C n \psi (n) (\log n)^{-1}(\log \log n)^{C} P \big(  I_{j} \leftrightarrow I_{l-j}' \big) \\
&\le (l+2) n (\log n)^{-b} \psi (n) \big\{ 1 + O \big( (\log n)^{-1/12} \big) \big\} P \big(  I_{j} \leftrightarrow I_{l-j}' \big)\\
& \ \ \ \ \ \ \ \ + C n \psi (n) (\log n)^{-1}(\log \log n)^{C} P \big(  I_{j} \leftrightarrow I_{l-j}' \big).
\end{align*}
Consequently, for $3 \le l \le N/2$, we have 
\begin{align*}
&E (D_{n}^{1} + D_{n}^{2}-D_{n} \ ; \ V_{1}, \, L=l) \le \sum_{j=1}^{l-1} E \big( D_{n}^{1} + D_{n}^{2}-D_{n} \ ; \ V_{1}, \, I_{j} \leftrightarrow I_{l-j}' \big) \\
&\le \sum_{j=1}^{l-1} (l+2) n (\log n)^{-b} \psi (n) \big\{ 1 + O \big( (\log n)^{-1/12} \big) \big\} P \big(  I_{j} \leftrightarrow I_{l-j}' \big) \\
& \ \ \ \ + \sum_{j=1}^{l-1} C n \psi (n) (\log n)^{-1}(\log \log n)^{C} P \big(  I_{j} \leftrightarrow I_{l-j}' \big) \\
&\le l^{2} n (\log n)^{-b} \psi (n) \big\{ 1 + O \big( (\log n)^{-1/12} \big) \big\} P \big(  I_{1} \leftrightarrow I_{l-1}' \big) \\
& \ \ \ \ + C l n \psi (n) (\log n)^{-1}(\log \log n)^{C} P \big(  I_{1} \leftrightarrow I_{l-1}' \big).
\end{align*}

We can handle the case 
$\frac{N}{2} + 1 \le l \le N$ with the exactly same argument as in the previous case. 
If $L=l$, there exists $j \in \{ l- \frac{N}{2}, \cdots, \frac{N}{2}  \}$ such that $I_{j} \leftrightarrow I_{l-j}'$ occurs (so there are $N-l+1$ choices for $j$) 
and we can conclude that for $\frac{N}{2} + 1 \le l \le N$, 
\begin{align*}
&E (D_{n}^{1} + D_{n}^{2}-D_{n} \ ; \ V_{1}, \, L=l)  \\
&\le l (N-l) n (\log n)^{-b} \psi (n) \big\{ 1 + O \big( (\log n)^{-1/12} \big) \big\} P \big(  I_{N/2} \leftrightarrow I_{l-N/2}' \big) \\
&\ \ \ \ + C (N-l) n \psi (n) (\log n)^{-1}(\log \log n)^{C} P \big(  I_{N/2} \leftrightarrow I_{l-N/2}' \big).
\end{align*}
(An easy adaptation to derive this is left to the reader.) Therefore, summing for $l=3,4, \cdots , N$, we have 
\begin{align}
&\sum_{l=3}^{N/2} E (D_{n}^{1} + D_{n}^{2}-D_{n} \ ; \ V_{1}, \, L=l) + \sum_{l=N/2 +1}^{N} E (D_{n}^{1} + D_{n}^{2}-D_{n} \ ; \ V_{1}, \, L=l) \notag \\
&\le \sum_{l=3}^{N/2} l^{2} n (\log n)^{-b} \psi (n) \big\{ 1 + O \big( (\log n)^{-1/12} \big) \big\} P \big(  I_{1} \leftrightarrow I_{l-1}' \big) \notag \\
& \ \ \  \ + C \sum_{l=3}^{N/2}  l n \psi (n) (\log n)^{-1}(\log \log n)^{C} P \big(  I_{1} \leftrightarrow I_{l-1}' \big) \notag \\
&\ \ \ \ + \sum_{l=N/2 +1}^{N} l (N-l) n (\log n)^{-b} \psi (n) \big\{ 1 + O \big( (\log n)^{-1/12} \big) \big\} P \big(  I_{N/2} \leftrightarrow I_{l-N/2}' \big) \notag  \\
&\ \ \ \  + C \sum_{l=N/2 +1}^{N} (N-l) n \psi (n) (\log n)^{-1}(\log \log n)^{C} P \big(  I_{N/2} \leftrightarrow I_{l-N/2}' \big). \label{comp}
\end{align}
On the other hand, Proposition \ref{fnk} guarantees that 
\begin{align*}
&P \big(  I_{1} \leftrightarrow I_{l-1}' \big) = \frac{1}{2 \log n} \log \Big[ 1 + \frac{1}{l^{2}-1} \Big] \Big\{ 1 + O \Big( (\log n)^{-\frac{3}{25}} \Big) \Big\}, \\
&P \big(  I_{N/2} \leftrightarrow I_{l-N/2}' \big) = \frac{1}{2 \log n} \log \Big[ 1 + \frac{1}{l^{2}-1} \Big] \Big\{ 1 + O \Big( (\log n)^{-\frac{3}{25}} \Big) \Big\}. 
\end{align*}
Substituting this and carrying out a careful calculation (it is here that we essentially use the sharp estimate obtained in Proposition \ref{fnk}), we see that 
\begin{align*}
&\mathrm{(i)} \  C \sum_{l=3}^{N/2}  l n \psi (n) (\log n)^{-1}(\log \log n)^{C} P \big(  I_{1} \leftrightarrow I_{l-1}' \big) = O \Big( n \psi (n) (\log n)^{-2} (\log \log n)^{C} \Big), \\
&\mathrm{(ii)} \  C \sum_{l=N/2 +1}^{N} (N-l) n \psi (n) (\log n)^{-1}(\log \log n)^{C} P \big(  I_{N/2} \leftrightarrow I_{l-N/2}' \big)  \\
& \  \ \ \ \ \ \ \ = O \Big( n \psi (n) (\log n)^{-2} (\log \log n)^{C} \Big), \\
&\mathrm{(iii)} \ \sum_{l=3}^{N/2} l^{2} n (\log n)^{-b} \psi (n) \big\{ 1 + O \big( (\log n)^{-1/12} \big) \big\} P \big(  I_{1} \leftrightarrow I_{l-1}' \big) \\
&\ \ \ \ \ \   - \sum_{l=N/2 +1}^{N} l^{2} n (\log n)^{-b} \psi (n) \big\{ 1 + O \big( (\log n)^{-1/12} \big) \big\} P \big(  I_{N/2} \leftrightarrow I_{l-N/2}' \big) \\ 
& \ \ \ \ \ \ \ = O \Big( n \psi (n) (\log n)^{-\frac{13}{12}} \Big), \\
&\mathrm{(iv)} \ \sum_{l=N/2 +1}^{N} l N n (\log n)^{-b} \psi (n) \big\{ 1 + O \big( (\log n)^{-1/12} \big) \big\} P \big(  I_{N/2} \leftrightarrow I_{l-N/2}' \big)  \\
& \ \ \ \ \ \ \ \ \  = \frac{\log 2}{2 \log n} n \psi (n)  \big\{ 1 + O \big( (\log n)^{-1/12} \big) \big\}.
\end{align*}
Note that in (iii) it is important to consider the difference between the two sums. We also note that the leading term arises from the sum in (iv).

The remainder concerns the case $L = 2$. In this case, by definition of $L$,
\begin{equation}\label{constr}
S[0, t_1] \cap S[n/2, n] = S[0, n/2] \cap S[t_{1}', n] = \emptyset.
\end{equation}
We aim to show that there exist small $j$ and  $t \in [t_{j}, n/2]$ such that $t$ is a cut time up to $n$. 
If we define 
\begin{align*}
&H_{1} := \Big\{ \exists T_{1} \in [t_{2}, t_{2}+a_{n,-6}] \text{ such that } S[t_{3}, T_{1}] \cap S[T_{1}+1, n/2] = \emptyset \Big\} \text{ and } \\
&H_{2} := \Big\{ \exists T_{2} \in [t_{4}-a_{n,-6}, t_{4}] \text{ such that } S[0, T_{2}] \cap S[T_{2}+1, t_{3}] = \emptyset \Big\},
\end{align*}
then $H_{1}$ and $H_{2}$ are independent and $P (H_{i}) = 1 - O \big( (\log n)^{-1} \log \log n \big)$ for $i=1,2$.  Thus, 
\begin{equation}\label{hh}
E (D_{n}^{1}+ D_{n}^{2} - D_{n} \ ; \ V_{1}, \, L=2, \, H_{1}^{c} \cap H_{2}^{c}) = O \Big( n \psi (n) (\log n)^{-2} (\log \log n)^{2} \Big).\end{equation}
Suppose that $H_{1}$ occurs but $T_{1}$ is not a cut time up to $n$. Then the constraint of \eqref{constr} guarantees that $S[0, t_{3}] \cap S[t_{2}, n/2] \neq \emptyset$. Let $j_{n} = (\log n)^{\frac{1}{100}}$. By Proposition \ref{fnk}, we know that 
$$P \Big( S[0, t_{j_{n}}] \cap S[t_{2}, n/2] \neq \emptyset \Big) \le C \sum_{j=j_{n}}^{N/2} (\log n)^{-1} j^{-2} \le C (\log n)^{-\frac{101}{100}}.$$
Since $-\frac{101}{100} < -1$, we may regard this as a negligible event. So, suppose that $S[t_{j_{n}}, t_{3}] \cap S[t_{2}, n/2] \neq \emptyset$, and let 
$$H_{1}' = \{ \exists T_{1}' \in [t_{j_{n}+1}-a_{n,-6}, t_{j_{n}+1}] \text{ such that } T_{1}' \text{ is a cut time up to } n \}.$$
A similar method to that used to prove \eqref{deco} shows that 
\begin{align*}
P \Big( S[t_{j_{n}}, t_{3}] \cap S[t_{2}, n/2] \neq \emptyset, \, (H_{1}')^{c} \Big) &\le C \frac{(\log \log n)^{C}}{\log n} P \big(S[t_{j_{n}}, t_{3}] \cap S[t_{2}, n/2] \neq \emptyset \big) \\
&\le C \frac{(\log \log n)^{C}}{(\log n)^{2}},
\end{align*}
where we used Proposition \ref{fnk} in the last inequality. Consequently,
\begin{align*}
&E (D_{n}^{1}+ D_{n}^{2} - D_{n} \ ; \ V_{1}, \, L=2, \, H_{1}) \\
&\le E (D_{n}^{1}+ D_{n}^{2} - D_{n} \ ; \ V_{1}, \, L=2, \, H_{1}, \, T_{1} \text{ is a cut time up to } n ) \\
& + E \Big( D_{n}^{1}+ D_{n}^{2} - D_{n} \ ; \ V_{1}, \, L=2, \, H_{1}, \, S[0, t_{3}] \cap S[t_{2}, n/2] \neq \emptyset  \Big) \\
&\le E (D_{n}^{1}+ D_{n}^{2} - D_{n} \ ; \ V_{1}, \, L=2, \, H_{1}, \, T_{1} \text{ is a cut time up to } n ) \\
&+ E \Big( D_{n}^{1}+ D_{n}^{2} - D_{n} \ ; \ V_{1}, \, L=2, \, H_{1}, \, S[t_{j_{n}}, t_{3}] \cap S[t_{2}, n/2] \neq \emptyset  \Big) \\ 
&\ \ \ \ \ \ + O \Big( n \psi (n) (\log n)^{-\frac{101}{100}} \Big) \\
&\le E (D_{n}^{1}+ D_{n}^{2} - D_{n} \ ; \ V_{1}, \, L=2, \, H_{1}, \, T_{1} \text{ is a cut time up to } n ) \\
&+ E \Big( D_{n}^{1}+ D_{n}^{2} - D_{n} \ ; \ V_{1}, \, L=2, \, H_{1}, \, S[t_{j_{n}}, t_{3}] \cap S[t_{2}, n/2] \neq \emptyset, \, H_{1}'  \Big) \\
& \ \ \ \ \ \ + O \Big( n \psi (n) (\log n)^{-\frac{101}{100}} \Big) \\
&\le E \Big( D_{n}^{1}+ D_{n}^{2} - D_{n} \ ; \ V_{1}, \, L=2, \,  \exists T_{1} \in [t_{2}, t_{2}+a_{n,-6}] \\
&\ \ \ \ \ \ \ \ \ \ \ \text{ such that $T_{1}$ a cut time up to } n \Big) \\
&+ E \Big( D_{n}^{1}+ D_{n}^{2} - D_{n} \ ; \ V_{1}, \, L=2, \,  \exists T_{1}' \in [t_{j_{n}+1}-a_{n,-6}, t_{j_{n}+1}] \\
&  \ \ \ \ \ \ \ \ \ \ \ \text{ such that } T_{1}' \text{ is a cut time up to } n \Big) \\
&\ \ \ \ \ \ + O \Big( n \psi (n) (\log n)^{-\frac{101}{100}} \Big) \\
&\le E \Big( D_{n}^{1}+ D_{n}^{2} - D_{n} \ ; \ V_{1}, \, L=2, \,  \exists T_{1} \in [t_{j_{n}+2}, n/2] \\
&\ \ \ \ \ \ \ \ \ \ \ \ \ \text{ such that } T_{1} \text{ is a cut time up to } n \Big) \\
&\ \ \ \ \ \ + O \Big( n \psi (n) (\log n)^{-\frac{101}{100}} \Big).
\end{align*}
Recalling \eqref{hh}, the same procedure for $H_{2}$ (and for $[n/2, n]$) yields that 
\begin{align*}
&E (D_{n}^{1}+ D_{n}^{2} - D_{n} \ ; \ V_{1}, \, L=2) \\
&\le E \Big( D_{n}^{1}+ D_{n}^{2} - D_{n} \ ; \ V_{1}, \, L=2, \,  \exists T_{1} \in [t_{j_{n}+2}, n/2]  \text{ and } T_{2} \in [n/2, t_{j_{n}+2}']  \\ 
& \ \ \ \ \ \ \ \ \ \text{ such that } T_{1} \text{ and } T_{2} \text{ are cut times up to } n \Big) + O \Big( n \psi (n) (\log n)^{-\frac{101}{100}} \Big) \\
&\le 2 (j_{n}+2) n (\log n)^{-b} \psi (n) + O \Big( n \psi (n) (\log n)^{-\frac{101}{100}} \Big) = O \Big( n \psi (n) (\log n)^{-\frac{101}{100}} \Big).
\end{align*}
Combining this estimate with (i)-(iv) above, \eqref{1st}, \eqref{2nd} and \eqref{comp}, we have 
$$E (D_{n}^{1}+ D_{n}^{2} - D_{n} ) \le \frac{\log 2}{2 \log n} n \psi (n)  \big\{ 1 + O \big( (\log n)^{-1/100} \big) \big\}.$$
Dividing both sides by $n$ and using \eqref{eqiv2}, 
\begin{align*}
\psi(n/2) - \psi (n) &\le \frac{\log 2}{2 \log n}  \psi (n)  \big\{ 1 + O \big( (\log n)^{-1/100} \big) \big\} \\
&= \frac{\log 2}{2 \log n}  \psi (n/2)  \big\{ 1 + O \big( (\log n)^{-1/100} \big) \big\}, 
\end{align*}
as desired.
\end{proof}

 The next proposition gives a lower bound of $\psi(n/2) - \psi (n)$ via a more detailed estimate of a lower bound of $E (D_{n}^{1}+ D_{n}^{2} - D_{n})$ (recall \eqref{eq:Dn-s}).

\begin{proposition}\label{lower}
We have 
$$\psi(n/2) - \psi (n) \ge  \frac{\log 2}{2 \log n}  \psi (n/2)  \big\{ 1 + O \big( (\log n)^{-c} \big) \big\}.$$
\end{proposition}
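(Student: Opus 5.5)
We prove the lower bound on $E(D^{1}_{n}+D^{2}_{n}-D_{n})$ by mirroring the upper-bound argument of Proposition \ref{upper}, decomposing according to the value of $L$ from \eqref{L}. Since $D^{1}_{n}+D^{2}_{n}-D_{n}\ge 0$ always, we may discard any event we like. The target is
$$E(D^{1}_{n}+D^{2}_{n}-D_{n}) \ge \frac{\log 2}{2\log n}\, n\psi(n)\big\{1-O((\log n)^{-c})\big\},$$
after which we divide by $n$ and use \eqref{eqiv2} as at the end of Proposition \ref{upper}.

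The key step is a pathwise lower bound on the event $\{L=l\}$ for $N/2+1\le l\le N$; we expect these values of $l$ to give the leading term, exactly as in (iv). Fix $j$ with $I_{j}\leftrightarrow I_{l-j}'$. Then some $s\in I_{j}$ and $s'\in I'_{l-j}$ satisfy $S(s)=S(s')$, and $D_{n}\le D^{1}_{n}-d(s,n/2)+D^{2}_{n}-d(n/2,s')$ up to corrections. We make this rigorous through cut times. Pick cut times up to $n$: $T\in[t_{j-1}-a_{n,-6},t_{j-1}]$, which lies just inside $I_{j}$ on the $n/2$ side, and $T'\in[t'_{l-j-1},t'_{l-j-1}+a_{n,-6}]$. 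The segment $[T,T']$ is then separated from the rest of the path, and any geodesic from $0$ to $S(n)$ passes through $S(T)$ and $S(T')$. Its portion between them has length at most $d(S(T),S(s))+d(S(s'),S(T'))\le 2a_{n,-b}$. Hence
$$D^{1}_{n}+D^{2}_{n}-D_{n}\ge \sum_{i=1}^{j-1}D(i)+\sum_{i=1}^{l-j-1}D(i)'-2a_{n,-b}-O(a_{n,-6}),$$
where we use that segments between consecutive cut times contribute additively to $D^{1}_{n}$, $D^{2}_{n}$ and $D_{n}$.

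Next we need a lower-tail analogue of Lemma \ref{uppertail}. Its purpose is to show $\sum_{i=1}^{j-1}D(i)\ge (j-1)a_{n,-b}\psi(n)\{1-O((\log n)^{-c})\}$ with probability $1-O((\log n)^{-2})$, conditionally on, or at least jointly with, the intersection event. The natural route is Chebyshev applied to sums of the independent $D(i)$, using the variance bound \eqref{var-sumstar} together with \eqref{eqiv1}. Since $\sum_i D(i)$ involves many blocks, the relative fluctuation is at most $(\log n)^{-b/2}$ times a $\psi$ factor, which suffices.

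The main obstacle is the dependence between the intersection event $I_{j}\leftrightarrow I'_{l-j}$, which has probability of order $(\log n)^{-1}l^{-2}$, and both the existence of the cut times and the typical size of the $D(i)$. For the cut times we plan to reuse the freezing-lemma argument \eqref{freez}--\eqref{deco}. This shows that, conditionally on the intersection, failure to find $T,T'$ has probability $O((\log n)^{-1}(\log\log n)^{C})$, plus the probability $O(\log\log n/\log n)$ from \cite[Lemma 7.7.4]{Lawb}. For the $D(i)$, only the two blocks $I_{j}$ and $I'_{l-j}$ touch the intersection, so the other blocks are independent of it and we can simply drop the two affected ones. The price is a loss of $O(a_{n,-b}\psi(n))$ per $l$, which is acceptable.

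Summing over $l$ and $j$ with Proposition \ref{fnk} then gives a total of the form
$$\sum_{l}\sum_{j} (l-2)\,a_{n,-b}\psi(n)\,P(I_{j}\leftrightarrow I'_{l-j})\,\{1-O(\cdot)\},$$
restricted to $\{L=l\}$. To pass from $\{L=l\}$ back to single intersections we use the $L=\max$ structure: $\sum_l P(L=l)\cdot l$ is bounded below by inclusion–exclusion, or more simply by noting that on $\{L=l\}$ we get the shortcut of length $l$. We therefore write $E[(L-2)a_{n,-b}\psi(n)]$ and compute $P(L\ge l)$ via Proposition \ref{fnk} applied to the unions $S[0,t_{i}]\cap S[t'_{l-i},n]$, which gives $P(L\ge l)=\frac{1}{2\log n}\log\frac{N^{2}}{\cdots}$ up to relative error $(\log n)^{-3/25}$. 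The resulting Riemann-sum calculation should reproduce the constant $\log 2/(2\log n)$ of (iv), and the errors stay below $n\psi(n)(\log n)^{-1-c}$.
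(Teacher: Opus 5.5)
Your pathwise step fails because the cut times are on the wrong side of the shortcut. Suppose $S(s)=S(s')$ with $s\in I_j$ and $s'\in I'_{l-j}$. Then $s'>T$, so $T$ can be a cut time up to $n$ only if $s>T$. Likewise, $T'$ being a cut time forces $s'<T'$. With your choice $T\in[t_{j-1}-a_{n,-6},t_{j-1}]$ and $T'\in[t'_{l-j-1},t'_{l-j-1}+a_{n,-6}]$, every intersection would therefore have to occur between two windows of length $a_{n,-6}$. That event has negligible probability compared with $P(I_j\leftrightarrow I'_{l-j})$. So, conditionally on the intersection, your cut times essentially never exist. The estimate \eqref{deco} you want to reuse concerns cut times \emph{outside} the loop, not inside it.

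The paper places the cut times up to $n$ in $I_{j+1}$ and $I'_{l-j+1}$ (events $W^4,W^5$). This gives
\[
D^1_n+D^2_n-D_n=d_{{\cal G}_{T,n/2}}(S(T),S(n/2))+d_{{\cal G}_{n/2,T'}}(S(n/2),S(T'))-d_{{\cal G}_{T,T'}}(S(T),S(T')),
\]
and the last term is $O(a_{n,-b}\psi(n))$ on $W^6\cap W^7$. To bound the first two terms from below, the paper needs a second kind of cut time, one for the half-path only: $W^8$ asks for a cut time up to $n/2$ in $I_{j-1}$, and $W^9$ is the analogue on the other side. These are compatible with the loop. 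It also needs a lower tail for the single distance $d_{{\cal G}_{t_j,n/2}}(S(t_j),S(n/2))$, which it gets from Lemma \ref{var} (events $W^{10},W^{11}$). Your substitute $\sum_{i<j}D(i)$ bounds this distance only from above, by the triangle inequality. Turning it into a lower bound would require controlling the deficit from blocks without nearby cut points, as in \eqref{sum-D}, and doing so conditionally on the intersection.

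That conditioning is the second gap. The blocks other than $I_j$ and $I'_{l-j}$ are not independent of $\{I_j\leftrightarrow I'_{l-j}\}$. This event depends on $S(t'_{l-j-1})-S(t_{j-1})$, i.e.\ on every intermediate increment. Conditioning on it (probability $\asymp l^{-2}(\log n)^{-1}$) is a bridge-type constraint on all intermediate blocks, so neither the cut-time events nor the distances there decouple for free.

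Handling this is the heart of Lemma \ref{last}. One freezes one path, applies the Markov property at time $(j-1)a_{n,-b}$, and uses the local CLT. This bounds, uniformly over the intermediate path, the cost of returning and hitting the frozen block by $C(\log n)^{-1}(l-j)^{-2}(\log\log n)^C$. The result is $P(W^1\cap(W^i)^c)\le C\,P((W^i)^c)\,l^{-2}(\log n)^{-1+2\epsilon_2}(\log\log n)^C$. This is also why the paper restricts to $j,\,l-j\ge l(\log n)^{-\epsilon_2}$.

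Finally, (iii)--(iv) do not show that $N/2<l\le N$ carries the leading term. There the $l\le N/2$ sum is only cancelled algebraically against part of the $l>N/2$ sum. In the lower bound, the range $l\le N/2$ actually contributes about $\frac{n\psi(n)}{4\log n}$, more than two thirds of $\frac{\log 2}{2\log n}n\psi(n)$. So your pathwise bound must hold for all $l$, not just $l>N/2$.
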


Combining this with \eqref{eqiv2} and Proposition \ref{upper}, we see that our $\psi$ defined in \eqref{psi} satisfies the conditions (I) and (II) in Lemma \ref{deriv}. 

\begin{proof}
We will use the same notation introduced at the beginning of this section. The argumentation in this proof is very similar to that of   \cite[Theorem 1.2.3]{exact}. An important difference from \cite{exact} is that here we need to give all error terms carefully.

 Recall that $L$ is defined in \eqref{L} and $b = \frac{13}{12}$. As in the proof of  \cite[Theorem 1.2.3]{exact}, let us introduce the following events $W^{i}$ (the event $W^{i}$ here corresponds to $A^{i}$ in the proof of \cite[Theorem 1.2.3]{exact}): 
 \begin{align*}
&W^{1} =\{ I_{j} \leftrightarrow I_{l-j}' \}, \\
&W^{2} = \{ L= l \}, \\
&W^{3} = \Big\{  \text{there exists just  one pair } (j,k) \text{ which attains the maximum of } L  \Big\}, \\
&W^{4} = \Big\{ \exists T \in I_{j+1} \text{ such that } T \text{ is a cut time up to } n \Big\}, \\
&W^{5} = \Big\{ \exists T' \in I_{l-j+1}' \text{ such that } T' \text{ is a cut time up to } n \Big\}, \\
&W^{6} = \Big\{ \max \big\{  d_{{\cal G}_{t, t'}} \big( S(t), S(t') \big) \  \big| \ 0 \le t'-t \le a_{n, -b}, \, t \in I_{j+1} \cup I_{j} \cup I_{j-1} \big\} \le 2 a_{n,-b} \psi (n)  \Big\}, \\
&W^{7} = \Big\{ \max \big\{  d_{{\cal G}_{t, t'}} \big( S(t), S(t') \big) \  \big| \ 0 \le t'-t \le a_{n, -b}, \, t \in I_{l-j-1}' \cup I_{l-j}' \cup I_{l-j+1}' \big\} \\ 
&  \ \ \ \ \ \ \ \ \  \le 2 a_{n,-b} \psi (n)  \Big\}, \\
&W^{8} = \Big\{ \exists U \in I_{j-1} \text{ such that } U \text{ is a cut time up to } n/2 \Big\}, \\
&W^{9} = \Big\{ \exists U' \in I_{l-j-1}' \text{ such that } S[n/2, U'] \cap S[U'+1, n] = \emptyset \Big\}, \\
&W^{10} = \Big\{ d_{{\cal G}_{t_{j}, n/2}} \big( S (t_{j}), S(n/2) \big) \ge j a_{n, -b} \psi (n) \big( 1 - (\log n)^{-\epsilon_{1}} \big) \Big\}, \\ 
&W^{11} = \Big\{ d_{{\cal G}_{n/2, t'_{l-j}}} \big( S(n/2), S (t'_{l-j}) \big) \ge (l-j) a_{n, -b} \psi (n) \big( 1 - (\log n)^{-\epsilon_{1}} \big) \Big\},
 \end{align*}
 where $\epsilon_{1} >0$ in $W^{10}$ and $ W^{11}$ will be given later.

 An argument similar to that in \cite[Lemma 4.2.1]{exact} gives that 
 $$D_{n}^{1} + D_{n}^{2} - D_{n} \ge l a_{n, -b} \psi (n) \big( 1 - (\log n)^{-\epsilon_{1}} \big) -16 a_{n, -b} \psi (n),$$
 on the event $W^{1} \cap W^{2} \cap \cdots \cap W^{11}$. Furthermore, an imitation of \cite[(4.6)]{exact} shows that 
 \begin{align}
 &E (D_{n}^{1} + D_{n}^{2} - D_{n} ) \notag \\
 &\ge \sum_{l=(\log n)^{2 \epsilon_{2}}}^{N/2} \sum_{j= (\log n)^{- \epsilon_{2}} l}^{l- (\log n)^{- \epsilon_{2}} l} \Big\{ l a_{n, -b} \psi (n) \big( 1 - (\log n)^{-\epsilon_{1}} \big) -16 a_{n, -b} \psi (n) \Big\} \notag \\
 & \ \ \ \ \ \ \ \ \ \ \ \ \ \ \ \  \ \ \ \ \ \ \ \ \ \ \ \ \ \  \ \ \ \  \ \ \times P \big( W^{1} \cap W^{2} \cap \cdots \cap W^{11} \big) \notag  \\
 &+ \sum_{l=\frac{N}{2} \big( 1 + (\log n)^{- \epsilon_{2}} \big)}^{N} \sum_{j= l-N/2}^{N/2} \Big\{ l a_{n, -b} \psi (n) \big( 1 - (\log n)^{-\epsilon_{1}} \big) -16 a_{n, -b} \psi (n) \Big\} \notag \\
 &\ \ \ \ \ \ \ \ \ \ \ \ \ \ \ \  \ \ \ \ \ \ \ \ \ \ \ \ \ \  \ \ \ \  \ \ \times P \big( W^{1} \cap W^{2} \cap \cdots \cap W^{11} \big), \label{lower2}
 \end{align}
 where $\epsilon_{2} >0$ will be defined later. Thus, the rest is to prove the following lemma (which corresponds to \cite[Lemma 4.2.2]{exact}):
 
 \begin{lemma}\label{last}
 Suppose that $l$ and $j$ satisfy either 
 \begin{equation}\label{first}
 (\log n)^{2 \epsilon_{2}} \le l \le N/2  \ \ \text{ and } \ \  (\log n)^{- \epsilon_{2}} l \le j \le l- (\log n)^{- \epsilon_{2}} l
 \end{equation}
 or 
 \begin{equation}\label{second}
 \frac{N}{2} \big( 1 + (\log n)^{- \epsilon_{2}} \big) \le l \le N  \ \ \text{ and } \ \ l-N/2 \le j \le N.
 \end{equation}
  Then we have 
 $$P \big( W^{1} \cap W^{2} \cap \cdots \cap W^{11} \big) \ge P (W^{1}) \Big\{ 1 - C (\log n)^{-\frac{1}{12}}  \Big\}$$
 for some universal constant $C>0$. 
 \end{lemma}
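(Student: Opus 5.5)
We reduce the claim to showing that, conditionally on $W^{1}$, each of the events $W^{2},\dots,W^{11}$ fails with probability $O((\log n)^{-1/12})$. That is, we aim to prove
$$P\big(W^{1}\cap (W^{i})^{c}\big)\le C(\log n)^{-\frac{1}{12}}P(W^{1}), \qquad i=2,\dots,11.$$
A union bound over the ten indices then gives the lemma. Throughout, Proposition \ref{fnk} gives $P(W^{1})\asymp (\log n)^{-1}l^{-2}$ in both regimes \eqref{first} and \eqref{second}. Since $l\le N=(\log n)^{13/12}$, this is at least of order $(\log n)^{-19/6}$. Hence any failure event with unconditional probability $O((\log n)^{-10})$ can be absorbed.

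\textbf{Distance events ($W^{6},W^{7},W^{10},W^{11}$).} For $W^{6}$ and $W^{7}$, Lemma \ref{uppertail} (with $n$ replaced by $a_{n,-b}$) and \eqref{eqiv2} make the bad events have probability $O((\log n)^{-13/4+2\epsilon})$. However, these events are not independent of $W^{1}$, so we do not divide directly. Instead we split the relevant pieces of path into blocks away from the intersecting intervals. The pieces $I_{j\pm1}$ are disjoint in time from the pair $I_j,I'_{l-j}$ except at endpoints, and only $D(j)$-type quantities on neighbouring intervals matter. By the Markov property, those are independent of $W^{1}$, up to the single interval $I_j$ itself. For $I_j$ we use the crude bound $D(j)\le a_{n,-b}$ only where it suffices; where it does not, we rely on the variance bound of Lemma \ref{var} on sub-blocks of length $a_{n,-3}$, which are independent of $W^1$ except for $O(1)$ of them. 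For $W^{10}$ and $W^{11}$ we need a lower-tail bound. Here $j\ge (\log n)^{-\epsilon_2}l\ge(\log n)^{\epsilon_2}$ blocks are available, and the Chebyshev argument of Lemma \ref{uppertail}, applied to the sum over blocks together with the cut-point comparison \eqref{sum-D}, gives deviation probability roughly $(\log n)^{2\epsilon_1}/j$. The dependence on $W^{1}$ again enters only through $O(1)$ blocks. Choosing $\epsilon_1,\epsilon_2$ small (for example $\epsilon_2=1/6$, $\epsilon_1=1/48$) makes this $O((\log n)^{-1/12})$.

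\textbf{Cut-time events ($W^{4},W^{5},W^{8},W^{9}$).} We argue exactly as in the proof of Proposition \ref{upper}. We find a local cut time in a window of length $a_{n,-6}$, using \cite[Lemma 7.7.4]{Lawb}. This local event is independent of $W^{1}$ and fails with probability $O(\log\log n/\log n)$. Promoting the local cut time to a global one requires ruling out a further long-range intersection. We bound that intersection jointly with $W^{1}$ by the freezing lemma \eqref{freez}--\eqref{freez2}, which produces the factor $(\log\log n)^C/\log n$ times $P(W^{1})$, as in \eqref{deco}. This is much smaller than $(\log n)^{-1/12}$.

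\textbf{Maximality and uniqueness ($W^{2},W^{3}$) — the main obstacle.} $(W^{2})^{c}\cap W^{1}$ means that some pair $(j',k')$ with $j'+k'>l$ also intersects. $W^{3}$ similarly requires excluding a second pair with $j'+k'=l$. For a fixed second pair, we bound the joint probability of the two intersections by $C(\log\log n)^C(\log n)^{-1}P(W^{1})\,P(I_{j'}\leftrightarrow I'_{k'})(\log n)$. To get this, we condition on the first intersection and apply the freezing lemma to the second, treating nested pairs (which share a shortcut region) via the Markov property at the interval endpoints. Summing Proposition \ref{fnk} over $j'+k'\ge l$ gives
$$\sum_{m\ge l} m\,(\log n)^{-1}m^{-2}\cdot(\log n)\cdot\frac{(\log\log n)^C}{\log n},$$
which is not summable unless one uses that a second intersection far from the first costs an extra $(\log n)^{-1}$. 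Pairs adjacent to $(j,l-j)$ must be handled separately. Here the restriction to $j\ge(\log n)^{-\epsilon_2}l$, $l\ge(\log n)^{2\epsilon_2}$ is what keeps the number of such neighbours only a small power of $\log n$, so their total contribution is $O((\log n)^{-1/12})P(W^1)$. Getting this double-intersection estimate with a genuine extra factor of $(\log n)^{-c}$ is where I expect the real work to be; I would first try to reduce it to Proposition \ref{fnk} plus the freezing lemma \eqref{freez} applied at the time $t_j$.
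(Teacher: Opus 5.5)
Your reduction to $P(W^{1}\cap (W^{i})^{c})\le C(\log n)^{-1/12}P(W^{1})$ for each $i$, and your treatment of $W^{4},W^{5}$, agree with the paper. The remaining cases have genuine gaps.

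\textbf{The cases $W^{8}$--$W^{11}$.} You treat $W^{1}$ as depending only on the increments in $I_{j}$ and $I'_{l-j}$, so that events on other blocks are independent of it ``up to $O(1)$ blocks''. This is false. The event $W^{1}=\{S[t_{j},t_{j-1}]\cap S[t'_{l-j-1},t'_{l-j}]\neq\emptyset\}$ depends on the displacement $S(t'_{l-j-1})-S(t_{j-1})$. That displacement depends on every block between $I_{j}$ and $I'_{l-j}$, which is exactly where $(W^{8})^{c},\dots,(W^{11})^{c}$ live.

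For the same reason the argument of \eqref{deco} does not transfer to $W^{8},W^{9}$. In \eqref{deco} the local cut event sits before $t_{j}$ and is genuinely independent of $W^{1}$. For $W^{8}$, however, the window lies in $I_{j-1}$, and being a cut time up to $n/2$ involves $I_{j}$ itself. The paper flags this explicitly.

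The missing idea is the paper's decoupling:
\begin{itemize}
\item Translate to $S(t_{j-1})$, let $S^{1}$ be the reversed past and $S^{2}$ the future, and note that the bad event $B$ is measurable with respect to $S^{1}$ and $S^{2}[0,(j-1)a_{n,-b}]$.
\item Freeze $S^{1}[0,n]$. Use the Markov property at $(j-1)a_{n,-b}$ and the local CLT to bound by $C(l-j)^{-2}(\log\log n)^{8}$ the probability that $S^{2}$ is back within $2\sqrt{a_{n,-b}}(\log\log n)^{2}$ of the origin at time $(l-2)a_{n,-b}$.
\item From that time, use \eqref{freez} to bound the hit of $S^{1}[0,a_{n,-b}]$ by $C(\log\log n)^{C}/\log n$.
\end{itemize}
This gives $P(W^{1}\cap B)\le C P(B)(\log n)^{-1}(l-j)^{-2}(\log\log n)^{C}$.

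It is here, not in any counting of neighbouring pairs, that \eqref{first} enters. The bound $l-j\ge(\log n)^{-\epsilon_{2}}l$ (and $j\ge(\log n)^{-\epsilon_{2}}l$ for $W^{9},W^{11}$) converts the estimate into $C(\log n)^{2\epsilon_{2}}(\log\log n)^{C}P(B)P(W^{1})$.

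For $W^{8}$ one has $P(B)=O(\log\log n/\log n)$. For $W^{10}$ the unconditional Chebyshev bound from Lemma \ref{var} gives $C(\log n)^{-1/4+2\epsilon_{1}}$ in the paper. Together these force $\epsilon_{1},\epsilon_{2}$ to be small, e.g.\ $\epsilon_{1}=1/24$, $\epsilon_{2}=1/96$. Your choice $\epsilon_{2}=1/6$ does not survive the $(\log n)^{2\epsilon_{2}}$ loss.

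By contrast, $W^{6},W^{7}$ need no decoupling. One has $P(W^{1}\cap(W^{6})^{c})\le P((W^{6})^{c})\le C(\log n)^{-13/4}$ while $P(W^{1})\ge c(\log n)^{-19/6}$. Since $13/4-19/6=1/12$, this is precisely the exponent in the lemma.

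\textbf{The cases $W^{2},W^{3}$.} Here you leave the key estimate open, and your pair-by-pair scheme misidentifies the difficulty. Your displayed sum equals $\sum_{l\le m\le N}(\log\log n)^{C}/(m\log n)=O((\log\log n)^{C+1}/\log n)$, so there is no summability issue.

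The real problem is that your per-pair bound $C(\log\log n)^{C}P(W^{1})P(I_{j'}\leftrightarrow I'_{k'})$ fails for pairs next to $(j,l-j)$. For instance, on $W^{1}$ the block $I_{j+1}$ starts at $S(t_{j})$, within distance of order $\sqrt{a_{n,-b}}$ of $I'_{l-j}$. So it hits $I'_{l-j}$ with conditional probability of order $1/\log n$, not $l^{-2}/\log n$.

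The paper avoids summing over pairs altogether. On $W^{1}\cap(W^{2})^{c}$, one of the aggregate events $S[0,t_{j}]\cap S[n/2,n]\neq\emptyset$ or $S[t_{j-1},n/2]\cap S[t'_{l-j},n]\neq\emptyset$ occurs. Each is bounded jointly with $W^{1}$ by $C(\log n)^{-1}(\log\log n)^{C}P(W^{1})$, via the freeze-and-restart argument of \eqref{deco}. The case $(W^{3})^{c}$ is treated in the same way.
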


 \textit{Proof of Lemma \ref{last}.} 
 Since the case of \eqref{second} is similar to that of \eqref{first}, we will only consider the case of \eqref{first}. It suffices to show that 
 \begin{equation}\label{check}
 P ( W^{1} \cap (W^{i})^{c} ) \le C P (W^{1})  (\log n)^{-\frac{1}{12}},
 \end{equation}
 for each $i=2,3, \cdots , 11$.
 
 Let us begin with $i=2$. Suppose that $W^{1} \cap (W^{2})^{c}$ occurs. This implies that either $S [0, t_{j}] \cap S [n/2, n] \neq \emptyset$ or $S[t_{j-1}, n/2] \cap S[t_{l-j}', n] \neq \emptyset $ occurs. However,  an easy adaptation of the argumentation used to derive \eqref{deco} shows that
\begin{align*}
&P \Big( \{ I_{j} \leftrightarrow I_{l-j}' \} \cap \big[ \{ S[0, t_{j}] \cap S[n/2, n] \neq \emptyset \} \cup \{ S[t_{j-1}, n/2] \cap S[t_{l-j}', n] \neq \emptyset \} \big] \Big)   \\
&\le C (\log n)^{-1} (\log \log n)^{C} P \big( I_{j} \leftrightarrow I_{l-j}'  \big).
\end{align*}
 So, the $i=2$ case satisfies \eqref{check}. Since the complement of $W^{3}$ also implies a similar intersection of $S$, we see that the $i=3$ case also satisfies \eqref{check}.
 
 As for the case of $i=4, 5$, this is completely the same as \eqref{deco}. Thus, we know that \eqref{check} is satisfied for $i=4,5$. 
 
 The case of $i=6,7$ is simpler. Applying Lemma \ref{uppertail} (with $\epsilon = 0$), we have  
 $$P \big( (W^{6})^{c} \big) \vee P \big( (W^{7})^{c} \big) \le C (\log n)^{-\frac{13}{4}}.$$
 On the other hand, by Proposition \ref{fnk}, 
 $$P (W^{1}) \asymp l^{-2} (\log n)^{-1} \ge (\log n)^{-\frac{19}{6}}.$$
 Thus, \eqref{check} is satisfied for $i=6,7$. 
 
 The remaining cases ($i=8,9,10,11$) are all similar. Since the case $i = 11$ is discussed in detail in the proof of \cite[Lemma 4.2.2]{exact}, we will provide the details for the case $i = 8$ here. By   \cite[Lemma 7.7.4]{Lawb}, it follows that if 
 $$W' = \Big\{ \exists U \in [t_{j-2} - a_{n,-6}, t_{j-2}] \text{ such that } U \text{ is a cut time up to } n/2 \Big\},$$
 then $W' \subset W^{8}$ and $P (W') = 1- O \big( (\log n)^{-1} \log \log n\big)$. We aim to show that $(W')^{c}$ and $W^{1}$ are almost independent. By translating and reversing a path, we have 
 $$P \big( W^{1} \cap (W')^{c} \big) = P \big( W'' \cap  W''' \big),$$
 where 
 \begin{align*}
 &W'' = \Big\{ S^{1} [0, a_{n,-b}] \cap S^{2} [(l-2)a_{n,-b}, (l-1)a_{n,-b}] \neq \emptyset \Big\} \ \  \text{ and } \\
 &W''' = \Big\{ \nexists U \in [a_{n,-b}- a_{n,-6}, a_{n,-b}] \text{ such that } \\
 & \ \ \ \ \ \ \ \ \ \ \ \  \ \big( S^{1} [0, (N/2 -j+1) a_{n,-b}] \cup S^{2} [0, U] \big) \cap S^{2} [U+1, (j-1)a_{n,-b}] = \emptyset \Big\}.
 \end{align*}
 Here $S^{1}$ and $S^{2}$ stand for independent simple random walks in $\mathbb{Z}^{4}$ started at the origin.
 On the other hand,   \cite[Lemma 1.5.1]{Lawb} shows that 
\begin{align*}
 &P \Big( \text{diam} \big( S^{1}[0, a_{n,-b}] \big) \vee \text{diam} \big(  S^{2} [(l-2)a_{n,-b}, (l-1)a_{n,-b}] \big) \ge \sqrt{a_{n,-b}} (\log \log n)^{2} \Big) \\
 &\le C (\log n)^{-100}.
 \end{align*}
Here for $A \subset \mathbb{R}^{4}$, we write $\text{diam} (A) = \sup \{ |x-y| \ : \  x, y \in A \}.$
 Thus, let
 \begin{align*}
& W_{\ast} = \Big\{ \text{diam} \big( S^{1}[0, a_{n,-b}] \big) \vee \text{diam} \big(  S^{2} [(l-2)a_{n,-b}, (l-1)a_{n,-b}] \big) \le \sqrt{a_{n,-b}} (\log \log n)^{2} \Big\},  \\ 
&  W_{\ast \ast} = \Big\{ \big| S^{2} \big( (l-2)a_{n,-b} \big) \big| \le 2 \sqrt{a_{n,-b}} (\log \log n)^{2} \Big\}, \\
& \text{ and } \ W_{\ast \ast \ast } = \Big\{ \text{dist} \Big( S^{2} \big( (l-2)a_{n,-b} \big),  S^{1} [0, n] \Big) \ge \sqrt{n} (\log n)^{-20} \Big\}.
\end{align*}
 Note that $W'' \cap W_{\ast} \subset W_{\ast \ast}$. By \cite[Theorem 1.2.1]{Lawb} and \cite[Proposition 1.5.10]{Lawb}, we have
 $$P (W_{\ast \ast \ast}) = 1- O \big( (\log n)^{-10} \big). $$
 
 Therefore, combining these estimates, we have 
 \begin{align*}
 &P \big( W'' \cap W''' \big) \le P \big( W" \cap W''' \cap W_{\ast} \big) + C (\log n)^{-100} \\
 &\le P \big( W'' \cap W''' \cap W_{\ast \ast} \big) + C (\log n)^{-100} \le  P \big( W'' \cap W''' \cap W_{\ast \ast} \cap W_{\ast \ast \ast} \big) + C (\log n)^{-10}.
 \end{align*}

In this case, we apply the `freezing lemma' \eqref{freez} according to the following procedure.

\begin{itemize}
\item We freeze $S^{1}[0,n]$ in the present setting. That is, in the definition of $Y_q$ in \eqref{YQ}, we interchange the roles of $S^{1}$ and $S^{2}$. Consequently, in this case $Y_q$ is an $S^{1}[0,n]$-measurable random variable.

\item We use the Markov property for $S^{2}$ at time $(l-2)a_{n,-b}$. (We note that $W'''$ is an event measurable with respect to $S^{1}$ and $S^{2} [0, (j-1)a_{n,-b}]$. We also recall that  $j-1 < l-2$ in the present setting.) Thanks to the event $W_{\ast\ast\ast}$, we can choose $q = 20$ in \eqref{YQ}.

\item Applying the freezing lemma \eqref{freez} with $p = 10$, we obtain that, uniformly over the paths of $S^{1}[0,n]$ with probability $1 - O\big((\log n)^{-10}\big)$, the probability that $S^{2}$ intersects $S^{1}[0,a_{n,-b}]$ is bounded above by $\frac{C(\log\log n)^C}{\log n}$.
    
\end{itemize}
By applying the freezing lemma \eqref{freez} as above, we have

 \begin{align*}
 &P \big( W'' \cap W''' \cap W_{\ast \ast} \cap W_{\ast \ast \ast} \big) + C (\log n)^{-10} \\
 &\le  P \Big( W_{\ast \ast} \cap W''' \Big) \times \frac{ C (\log \log n)^{C}}{\log n} + C (\log n)^{-10}.
 \end{align*}
 However, using the Markov property for $S^{2}$ at time $(j-1)a_{n,-b}$ (again, note that $W'''$ is an event measurable with respect to $S^{1}$ and $S^{2} [0, (j-1)a_{n,-b}]$), we have 
 \begin{align*}
 &P \Big( W_{\ast \ast} \cap W''' \Big) \\
 &\le P (W''') \max_{x \in \mathbb{Z}^{4}}  P^{x} \Big( \big| S \big( (l-j-1) a_{n,-b} \big) \big| \le 2 \sqrt{a_{n,-b}} (\log \log n)^{2} \Big)  \\
 &\le C (\log n)^{-1} (\log \log n) (l-j)^{-2} (\log \log n)^{8} \\
 &\le C (\log n)^{-1} (\log \log n)^{9} l^{-2} (\log n)^{2 \epsilon_{2}},
 \end{align*}
 where we used 
 \begin{itemize}
 \item   \cite[Theorem 1.2.1]{Lawb} in the second inequality to show that the maximum is smaller than $C (l-j)^{-2} (\log \log n)^{8}$,
 \item \eqref{first} in the last inequality to show that $l-j \ge l (\log n)^{-\epsilon_{2}}$.
 \end{itemize}
 Thus,
 \begin{align*}
 &P \big( W^{1} \cap (W^{8})^{c} \big) \le P \big( W'' \cap W''' \big) \le C (\log n)^{-2 +2 \epsilon_{2}} l^{-2} (\log \log n)^{C} \\
 &\le C  (\log n)^{-1 +2 \epsilon_{2}} (\log \log n)^{C} P (W^{1}).
 \end{align*}
 (Since we must show that
$P \big( W^{1} \cap (W^{8})^{c} \big)$
is of a smaller order than
$P(W^{1}) \asymp l^{-2} (\log n)^{-1}$, 
a somewhat more involved argument is required than that used in the proof of \eqref{deco}, as shown above.) 
 
 Although we have not yet specified the value of $\epsilon_{2}$, the constraint from the inequality above is that $\epsilon_{2} \in (0, 1/2)$. Since $P \big( W^{1} \cap (W^{9})^{c} \big)$ satisfies the same inequality, \eqref{check} is satisfied for $i=8,9$. 
 
 As for $i=10, 11$, using Lemma \ref{var}, we see that 
 $$P \big( (W^{10})^{c} \big) \vee P \big( (W^{11})^{c} \big) \le C (\log n)^{-\frac{1}{4} + 2 \epsilon_{1}}.$$
 The same argument as above, we can prove that $W^{1}$ and $(W^{10})^{c}$ are almost independent in the sense that 
 \begin{align*}
 P \big( W^{1} \cap (W^{10})^{c} \big) &\le C P \big( (W^{10})^{c} \big) l^{-2} (\log n)^{-1 + 2 \epsilon_{2}} (\log \log n)^{C} \\
 &\le C (\log n)^{-\frac{1}{4} + 2 \epsilon_{1}} P \big( W^{1}  \big) (\log n)^{2 \epsilon_{2}} (\log \log n)^{C}.
 \end{align*}
 Choosing 
 \begin{equation}\label{epsilon}
 \epsilon_{1} := \frac{1}{24} \ \ \text{ and } \ \ \epsilon_{2} := \frac{1}{96},
 \end{equation}
 we get \eqref{check} for all $i$. \hspace{122mm} $\square$
 
 We are now ready to finish the proof of Proposition \ref{lower}. Recall that $\epsilon_{1}$ and $\epsilon_{2}$ are chosen as in \eqref{epsilon}. Combining \eqref{lower2} with Proposition \ref{fnk} and  Lemma \ref{last}, a careful calculation shows that
 \begin{align*}
 &E (D_{n}^{1} + D_{n}^{2} - D_{n} ) \notag \\
 &\ge \sum_{l=(\log n)^{2 \epsilon_{2}}}^{N/2} \sum_{j= (\log n)^{- \epsilon_{2}} l}^{l- (\log n)^{- \epsilon_{2}} l} \Big\{ l a_{n, -b} \psi (n) \big( 1 - (\log n)^{-\epsilon_{1}} \big) -16 a_{n, -b} \psi (n) \Big\} \\
 &\ \ \ \ \ \ \ \ \ \ \ \ \ \ \ \ \ \ \ \ \ \ \ \ \ \ \ \ \ \ \times  P \big( W^{1} \cap W^{2} \cap \cdots \cap W^{11} \big) \notag  \\
 &+ \sum_{l=\frac{N}{2} \big( 1 + (\log n)^{- \epsilon_{2}} \big)}^{N} \sum_{j= l-N/2}^{N/2} \Big\{ l a_{n, -b} \psi (n) \big( 1 - (\log n)^{-\epsilon_{1}} \big) -16 a_{n, -b} \psi (n) \Big\} \\
 &\ \ \ \ \ \ \ \ \ \ \ \ \ \ \ \ \ \ \ \ \ \ \ \ \ \ \ \ \ \ \times P \big( W^{1} \cap W^{2} \cap \cdots \cap W^{11} \big) \\
 &\ge \sum_{l=(\log n)^{2 \epsilon_{2}}}^{N/2} \sum_{j= (\log n)^{- \epsilon_{2}} l}^{l- (\log n)^{- \epsilon_{2}} l} \Big\{ l a_{n, -b} \psi (n) \big( 1 - (\log n)^{-\epsilon_{1}} \big) -16 a_{n, -b} \psi (n) \Big\} \\ 
 &\ \ \ \ \ \ \ \ \ \ \ \ \ \ \ \ \ \ \ \ \ \ \ \ \ \ \ \ \ \ \times P (W^{1}) \Big\{ 1 - C (\log n)^{-\frac{1}{12}}  \Big\} \\
 &+ \sum_{l=\frac{N}{2} \big( 1 + (\log n)^{- \epsilon_{2}} \big)}^{N} \sum_{j= l-N/2}^{N/2} \Big\{ l a_{n, -b} \psi (n) \big( 1 - (\log n)^{-\epsilon_{1}} \big) -16 a_{n, -b} \psi (n) \Big\} \\
 &\ \ \ \ \ \ \ \ \ \ \ \ \ \ \ \ \ \ \ \ \ \ \ \ \ \ \ \ \ \ \times P (W^{1}) \Big\{ 1 - C (\log n)^{-\frac{1}{12}}  \Big\} \\
 &\ge \sum_{l=(\log n)^{2 \epsilon_{2}}}^{N/2} \sum_{j= (\log n)^{- \epsilon_{2}} l}^{l- (\log n)^{- \epsilon_{2}} l} \Big\{ l a_{n, -b} \psi (n) \big( 1 - (\log n)^{-\epsilon_{1}} \big) -16 a_{n, -b} \psi (n) \Big\}   \\
 & \ \ \ \ \ \ \ \ \ \ \ \ \ \ \ \ \ \  \ \ \ \ \ \ \ \ \ \ \ \  \ \ \ \ \ \times \frac{1}{2 \log n} \, \log \Big[ 1 + \frac{1}{l^{2} -1} \Big] \,  \Big\{ 1 + O \Big( (\log n)^{-\frac{1}{12}} \Big) \Big\}  \\
 &+ \sum_{l=\frac{N}{2} \big( 1 + (\log n)^{- \epsilon_{2}} \big)}^{N} \sum_{j= l-N/2}^{N/2} \Big\{ l a_{n, -b} \psi (n) \big( 1 - (\log n)^{-\epsilon_{1}} \big) -16 a_{n, -b} \psi (n) \Big\}   \\
 & \ \ \ \ \ \ \ \ \ \ \ \ \ \ \ \ \ \ \ \ \ \ \ \ \  \  \ \ \ \ \ \ \ \ \ \ \ \times \frac{1}{2 \log n} \, \log \Big[ 1 + \frac{1}{l^{2} -1} \Big] \,  \Big\{ 1 + O \Big( (\log n)^{-\frac{1}{12}} \Big) \Big\} \\
 &\ge  \sum_{l=(\log n)^{2 \epsilon_{2}}}^{N/2} \sum_{j= (\log n)^{- \epsilon_{2}} l}^{l- (\log n)^{- \epsilon_{2}} l} l a_{n, -b} \psi (n) \frac{1}{2 \log n} \, \log \Big[ 1 + \frac{1}{l^{2} -1} \Big] \Big\{ 1 - C (\log n)^{-2 \epsilon_{2}}  \Big\} \\
 &+  \sum_{l=\frac{N}{2} \big( 1 + (\log n)^{- \epsilon_{2}} \big)}^{N} \sum_{j= l-N/2}^{N/2} l a_{n, -b} \psi (n) \frac{1}{2 \log n} \, \log \Big[ 1 + \frac{1}{l^{2} -1} \Big] \Big\{ 1 - C (\log n)^{-  \epsilon_{1}}  \Big\} \\
 &\ge \sum_{l=1}^{N/2} \sum_{j= 1}^{l} l a_{n, -b} \psi (n) \frac{1}{2 \log n} \,  \frac{1}{l^{2} } +  \sum_{l=\frac{N}{2}}^{N} \sum_{j= l-N/2}^{N/2} l a_{n, -b} \psi (n) \frac{1}{2 \log n} \,  \frac{1}{l^{2} } \\
 & \ \ \ \ \ - O \Big( n \psi (n) (\log n)^{-1-\epsilon_{2}}\Big)  \\
 &\ge \sum_{l=\frac{N}{2}}^{N} N  a_{n, -b} \psi (n) \frac{1}{2 \log n} \,  \frac{1}{l} - O \Big( n \psi (n) (\log n)^{-1-\epsilon_{2}}\Big). \\
 \end{align*}
 (It is again here that we make essential use of the sharp estimate obtained in Proposition \ref{fnk}.)
However, the last quantity appearing in the above chain of inequalities coincides with
$$ \frac{\log 2}{2 \log n} n \psi (n) \Big\{ 1 - O \Big( (\log n)^{-\epsilon_{2}} \Big) \Big\} = \frac{\log 2}{2 \log n} n \psi (n/2) \Big\{ 1 - O \Big( (\log n)^{-\epsilon_{2}} \Big) \Big\},$$
where we used \eqref{eqiv2} in the last equation. This finishes the proof.
\end{proof}

\begin{proof}[Proof of Theorem \ref{main}]
Combining Lemma \ref{deriv}, \eqref{eqiv2}, Proposition \ref{upper} and Proposition \ref{lower}, we obtain \eqref{main2}. The first assertion \eqref{main1} can be proved similarly. 
\end{proof}

\bibliographystyle{plain} 
\bibliography{4d}

\end{document}